\newtheorem{thm}{Theorem}
\newtheorem{lem}{Lemma}
\newtheorem{prop}{Proposition}
\newtheorem{cor}{Corollary}
\newtheorem*{thm*}{Theorem}
\newtheorem*{prop*}{Proposition}
\newtheorem*{cor*}{Corollary}
\newtheorem*{conj*}{Conjecture}
\theoremstyle{definition}
\theoremstyle{definition}
\newtheorem{rmk}{Remark}
\newtheorem*{rmk*}{Remark}
\theoremstyle{definition}
\newcommand{\ZZ}{\mathbb Z}
\newcommand{\RR}{\mathbb R}
\newcommand{\QQ}{\mathbb Q}
\newcommand{\CC}{\mathbb C}
\newcommand{\NN}{\mathbb N}
\def\spr{\mathop{\mathrm{span_\RR}}\nolimits}
\def\tr{\mathop{\mathrm{tr}}\nolimits}
\def\diag{\mathop{\mathrm{diag}}\nolimits}
\def\dis{\mathop{\mathrm{dis}}\nolimits}
\def\sign{\mathop{\mathrm{sign}}\nolimits}
\newcommand{\bil}{\mathcal B}
\newcommand{\qu}{\mathcal Q}
\begin{document}

\author{Hanno von Bodecker\footnote{Fakult{\"a}t f{\"u}r Mathematik, Universit{\"a}t Bielefeld, Germany}}

\title{Twisted Dirac operators on certain nilmanifolds associated to even lattices}

\date{}

\maketitle

\begin{abstract}
Starting from an even definite lattice, we construct a principal circle bundle covered by a certain three-step nilpotent Lie group $G$. On the base space, which is  again a nilmanifold, we then study the Dirac operator twisted by the associated complex line bundles. Noting that the whole situation fibers over the circle, we are able to determine the reduced $\eta$--invariant of these Dirac operators in the adiabatic limit.\\
As an application, we consider the total space of the circle bundle, equipped with a parallelism induced by $G$, as an element in the stable homotopy groups of the sphere and use the $\eta$--invariants to analyze its status in the Adams--Novikov spectral sequence. 
\end{abstract}

\section{Introduction and statement of the results}
Recall that on a Riemannian manifold $M$ with a fixed spin structure, there is a natural first order differential operator, the Dirac operator $\eth$. If $M$ is closed and odd-dimensional, the spectrum of this operator is real and discrete. An interesting invariant is the $\eta$--invariant: The sum
$$\eta(\eth,s)=\sum_{\sigma\in \mathrm{spec}(\eth)\backslash\{0\}} \sigma/|\sigma|^{1+s}$$
converges for sufficiently large real part of $s$; by analytic continuation, the expression above defines a meromorphic function on the complex plane which is actually holomorphic at $s=0$, and its value there is referred to as the $\eta$--invariant of the Dirac operator. The same holds true in the situation of the Dirac operator twisted by a hermitian vector bundle $E$ with unitary connection.

Unfortunately, only in very few situations one can hope to determine the spectrum of Dirac operators explicitly;  relatively simple exceptions come from homogeneous spaces for {\em compact} Lie groups $G$ -- in this situation, the space of $L^{2}$--spinors on $G/K$ (possibly twisted by homogeneous vector bundles) decomposes into {\em finite--dimensional} $G$--modules (the so-called isotypic components), on which the problem reduces to linear algebra (keep in mind though, that the precise decomposition into isotypical components requires knowledge of the branching rules). (As shown in \cite{Goette:1999cq}, the computation of the $\eta$--invariant in this situation can actually be carried out without explicitly determining the spectrum.)

For non-compact Lie groups $G$, things become more complicated due to the fact that the irreducible representations no longer need to be finite-dimensional. Nevertheless, for nilpotent Lie groups, there is still a well-understood representation theory, see e.g.\ \cite{Corwin:1990ve}. In \cite{Deninger:1984zt}, Deninger and Singhof have determined the $\eta$--invariant of (a slightly modified version of) the untwisted Dirac operator on Heisenberg nilmanifolds (i.e.\ quotients $M^{2n+1}=\Gamma\backslash N^{2n+1}$, where $N^{2n+1}$ denotes the two-step nilpotent, 1--connected Heisenberg group of real dimension $2n+1$, and $\Gamma\subset N^{2n+1}$ a suitable lattice). (A detailed analysis of the spectrum of the unmodified Dirac operator in the 3--dimensional case has been carried out in \cite{Ammann:1998fk}.)

The first purpose of this article is to generalize this result by computing the $\eta$--invariant of the Dirac operator twisted by a suitable hermitian line bundle on a family of nilmanifolds $M^{2r+1}$. More precisely, we proceed as follows: Given a positive-definite even lattice $\Lambda$ of rank $r$, we define a nilpotent Lie group $G_{\Lambda}$ by equipping $(\ZZ\times\Lambda\times\ZZ\times\Lambda)\otimes_{\ZZ}\RR$ with the multiplication
\begin{equation*}
\begin{split}
&(z, c,b, a)\cdot(z',c',b', a')\\
=&\left(z+z'+\bil(a, c')+\qu( a)b', c+ c'+ ab',b+b', a+ a'\right),
\end{split}
\end{equation*}
where $\qu$ and $\bil$ denote the quadratic and symmetric bilinear form, respectively. Then it is easy to see that the compact nilmanifold $\Gamma_{\Lambda}\backslash G_{\Lambda}$, where $\Gamma_{\Lambda}\subset G_{\Lambda}$ is the discrete subgroup determined by the integral structure,  carries the structure of a (right) principal circle bundle, its base being itself a nilmanifold,  denoted $M_{\Lambda}$. Explicitly specifying a parallelism for $M_{\Lambda}$ gives rise to a Riemannian metric and a preferred spin structure, and we may form the Dirac operator twisted by (powers of) the tautological hermitian line bundle $\pi\colon\lambda\rightarrow M_{\Lambda}$. In order to compute the $\eta$--invariant, we do not attempt to determine the spectrum of this operator explicitly; instead, we observe that $M_{\Lambda}$ fibers over the circle  and investigate the adiabatic limit \cite{Bismut:1989dk,Dai:1991os}, i.e.\ we study the behaviour under blowing up the metric in the direction of the base, the result being:
\begin{thm}
Let $\lambda$ be the hermitian line bundle constructed above, and let $\eth_{\epsilon}$ be the Dirac operator on $M_{\Lambda}$ with respect to the metric $\langle\cdot,\cdot\rangle_{\epsilon}$ and twisted by $\lambda^{d}$ for some non-zero integer $d$. Then the adiabatic limit of its reduced $\eta$--invariant is given by
\begin{equation*}
\lim_{\epsilon\rightarrow0}\frac{\eta(\eth_{\epsilon})+\dim\ker(\eth_{\epsilon})}{2}\equiv(\sign(d))^{r}\sum_{\rho\in\dis\Lambda_{d}}\left(\frac{1}{2}-\overline{\qu}_{d}(\rho)\right)\mod\ZZ.
\end{equation*}
\end{thm}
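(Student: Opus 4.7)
My strategy is to exploit the fact, noted in the statement, that $M_\Lambda$ fibers over $S^1$, and to apply the adiabatic limit formula of Bismut--Cheeger and Dai \cite{Bismut:1989dk,Dai:1991os}. Projecting onto the $b$-coordinate realizes $M_\Lambda$ as a fiber bundle with base $S^1$ whose fiber $F_\Lambda$ is a $2r$-dimensional nilmanifold associated with the Heisenberg-type subgroup $H_\Lambda\subset G_\Lambda$ obtained by setting $b=0$; restricted to each fiber, $\lambda^d$ has central character $e^{2\pi i d z}$. Since the base is one-dimensional and the fiberwise Dirac operator will have constant-rank kernel by $S^1$-equivariance, the adiabatic limit formula applicable here reads, modulo $\ZZ$,
\[
\lim_{\epsilon\to 0}\frac{\eta(\eth_\epsilon)+\dim\ker\eth_\epsilon}{2}\equiv\int_{S^1}\hat A(TS^1)\wedge\hat\eta\;+\;\frac{\eta(\eth^{\ker})+\dim\ker\eth^{\ker}}{2},
\]
where $\hat\eta$ is the Bismut--Cheeger $\eta$-form of the vertical family and $\eth^{\ker}$ is the induced Dirac operator on the bundle $\mathcal K\to S^1$ of vertical harmonic spinors.

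The core of the argument is the identification of $\mathcal K$. Using the Kirillov orbit / Stone--von~Neumann picture for $H_\Lambda$ \cite{Corwin:1990ve}, the space of $L^2$-spinors on $F_\Lambda$ twisted by $\lambda^d$ decomposes into Schr\"odinger-type summands; fixing the central character $e^{2\pi i d z}$ selects an isotypic component indexed by the finite discriminant group $\dis\Lambda_d$, where $\Lambda_d$ denotes $\Lambda$ with its quadratic form rescaled by $d$. On each summand the fiberwise Dirac operator becomes an algebraic combination of creation and annihilation operators built from the $c$- and $a$-variables, whose kernel is one-dimensional and of chirality $\sign d$ in each complex direction, which is the origin of the global factor $(\sign d)^r$. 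The integer translation $b\mapsto b+1$ preserves the $\rho$-summand and acts on it by the phase $e^{-2\pi i\overline\qu_d(\rho)}$---this is where the quadratic term $\qu(a)b'$ in the group law enters---so that $\mathcal K$ splits over $S^1$ as a direct sum of flat line bundles whose combined reduced $\eta$-invariant is exactly $(\sign d)^r\sum_{\rho\in\dis\Lambda_d}\bigl(\tfrac12-\overline\qu_d(\rho)\bigr)$ modulo $\ZZ$.

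Finally, one must show that $\int_{S^1}\hat A(TS^1)\wedge\hat\eta$ vanishes modulo $\ZZ$. Because $\hat A(TS^1)=1$, this reduces to evaluating the degree-one part of $\hat\eta$; the vertical tangent bundle is flat with respect to the chosen parallelism and the curvature of $\lambda^d|_{F_\Lambda}$ is $H_\Lambda$-invariant, so in a suitable frame $\hat\eta$ takes an elementary form whose integral over $S^1$ turns out to be integer-valued, in the spirit of the computations of \cite{Deninger:1984zt}. The step I expect to be most delicate is the middle paragraph: carefully coordinating the spin representation on $F_\Lambda$, the Heisenberg representations, and the $b$-dependent identifications of the lattice, so that the pair $(\dis\Lambda_d,\overline\qu_d)$ emerges with the correct sign conventions and the promised prefactor $(\sign d)^r$.
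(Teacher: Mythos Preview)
Your strategy matches the paper's: fiber over $S^1$ via the $b$-coordinate, apply Dai's adiabatic-limit formula, identify $\ker\eth^V$ as a rank-$|\dis\Lambda_d|$ bundle over $S^1$ via the Schr\"odinger model indexed by $\dis\Lambda_d$, and compute the base $\eta$-invariant from the resulting holonomies. The paper carries out the last step by computing $\bar\eth^B=e_0P^0\nabla_{e_0}P^0$ directly on the Hermite ground states, obtaining the spectrum $\{2\pi(\sign d)^r(k+\qu_d(\tilde\rho)):k\in\ZZ,\ \tilde\rho\in R_d\}$ and evaluating with the Hurwitz zeta function; your holonomy picture is equivalent. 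One detail you skate over: $\nabla_{e_0}$ also contains the spin-connection piece $-\tfrac14\sum_ke_{2k-1}e_{2k}$ and the expectation of the quadratic term $\qu({\bf x})$, and a short computation (using the creation/annihilation expansion of $\qu$) shows these two contributions cancel on the kernel, so that only the phase you wrote down survives.

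The genuine gap is the $\hat\eta$-term. Your reasons---flatness of the vertical tangent bundle with respect to the parallelism, $H_\Lambda$-invariance of the curvature of $\lambda^d$---do not bear on the object that actually appears: the $\eta$-form is built from the superconnection $\mathbb A_t=t^{1/2}\eth^V+\tilde\nabla$, where $\tilde\nabla$ is induced from the submersion-adapted connection $\nabla^{TM}$ (which has $\langle\nabla^{TM}_{\overline B}\overline X_i,\overline Y_j\rangle=-\tfrac12\delta_{ij}$), not from the flat parallelism; neither flatness nor invariance forces $\int_{S^1}\hat\eta\in\ZZ$, and \cite{Deninger:1984zt} computes $\eta$-invariants rather than $\eta$-forms, so the analogy does not carry weight. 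The paper in fact proves $\hat\eta\equiv 0$: the degree-one integrand is $-\tfrac12\,\tr_s\bigl((\eth^V\tilde\nabla\eth^V+(\eth^V)^2\tilde\nabla)e^{-t(\eth^V)^2}\bigr)$, and after decomposing each nonzero eigenspace of $(\eth^V)^2$ into $\eth^V$-invariant blocks of dimension $\le 2^r$, one checks via the creation/annihilation expression for $\qu({\bf x})$ that $\nabla_{e_0}$ projects on each such block to multiplication by a single scalar; since ${\rm d}b$ anticommutes with $\eth^V$, the supertrace then vanishes for every $t$. You should replace your heuristic with this (or an equivalent) argument.
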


Recall that a parallelism on a closed manifold canonically induces a trivialization of the stable normal bundle (unique up to homotopy), and hence, by the Pontrjagin--Thom construction, determines an element in the stable homotopy groups of the sphere.
A powerful algebraic tool for studying the stable homotopy groups of the sphere is the Adams--Novikov spectral sequence (see e.g.\ \cite{Ravenel:2004xh})
$$E_{2}^{s,t}\left[BP\right]=\textrm{Ext}^{s,t}_{BP_*BP}\left(BP_*,BP_*\right)\Rightarrow\pi_{t-s}^{S}S^{0}_{(p)}$$
giving rise to a filtration on the stable stems. Within this framework, the complex $e$--invariant introduced by Adams \cite{Adams:1966ys} becomes an invariant of first filtration, since it factors through the 1--line of the spectral sequence. On the other hand, Atiyah, Patodi, and Singer have shown that the $e$--invariant of (the element represented by) a parallelized manifold can be determined analytically using the $\eta$--invariant of the (untwisted) Dirac operator \cite{Atiyah:1975ai}, and this has been the route taken by Deninger and Singhof  to show that the Heisenberg nilmanifolds represent an infinite family of non-trivial elements in the stable stems (more precisely, they represent the image of the $J$--homomorphism, at least away from the prime two) \cite{Deninger:1984zt}.

In order to detect second filtration phenomena, Laures introduced the $f$--invariant, which 
is a follow-up to the $e$--invariant and takes values in the divided congruences between modular forms \cite{Laures:1999sh,Laures:2000bs},
\begin{equation*}
f\colon\pi^{S}_{2r+2}S^{0}\rightarrow\underline{\underline{D}}^{\Gamma_{1}(N)}_{r+2}\otimes{\mathbb{Q/Z}}.
\end{equation*}
Again, there is an analytical interpretation of this invariant involving index theory, albeit on manifolds with corners \cite{Bunke:2008,Bodecker:2008pi}. As shown in \cite{Bodecker:2014vn}, in the situation of a principal circle bundle over a parallelized manifold $M^{2r+1}$, the $f$--invariant can be determined analytically using the $\eta$--invariants of the Dirac operator twisted by powers of the tautological line, and one motivation for this article was to provide a family of tractable examples. Since $\pi^{S}_{4}S^{0}=0$, we directly proceed to lattices of rank two, in which case we have:

\begin{cor}\label{beta_{2/2}}
Let $\Lambda$ be a positive definite even lattice of rank two; then the element $\left[\Gamma_{\Lambda}\backslash G_{\Lambda}\right]\in\pi^{S}_{6}S^{0}\cong\ZZ/2$ is non-trivial if and only if $|\dis\Lambda|$ is odd.
\end{cor}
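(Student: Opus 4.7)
The plan is to compute the $f$--invariant of $[\Gamma_\Lambda\backslash G_\Lambda]$ analytically via the formalism of \cite{Bodecker:2014vn} applied to the principal circle bundle $\Gamma_\Lambda\backslash G_\Lambda\to M_\Lambda$, and then to read off its class in $\pi^S_6 S^0\cong\ZZ/2$ by inspecting its $2$--primary part in $\underline{\underline{D}}^{\Gamma_1(N)}_{4}\otimes\QQ/\ZZ$.

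First, I specialize the Theorem to $r=2$ to obtain, for every non-zero integer $d$, the adiabatic reduced $\eta$--invariant
$$S(d):=\sum_{\rho\in\dis\Lambda_d}\left(\tfrac12-\overline{\qu}_d(\rho)\right)\mod\ZZ$$
of the Dirac operator twisted by $\lambda^{d}$. Since $\dis\Lambda_d\cong\tfrac1d\Lambda^*/\Lambda$ has order $d^2|\dis\Lambda|$, the constant summand contributes $\tfrac12 d^2|\dis\Lambda|$, while the quadratic piece $\sum_\rho \overline{\qu}_d(\rho)$ is a Gauss-type sum over the discriminant form. Decomposing $\dis\Lambda$ into its $p$--primary components and invoking the Witt decomposition of the discriminant form, I expect the odd-primary contributions to be $2$--integral modulo $\ZZ$, so that only the $2$--primary summand affects the detection of a class in $\ZZ/2$.

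Next I assemble $\{S(d)\}_{d\ne 0}$ into the $q$--series representing $f([\Gamma_\Lambda\backslash G_\Lambda])$ as prescribed in \cite{Bodecker:2014vn}. When $|\dis\Lambda|$ is odd, the coefficient at each odd index $d$ is congruent to $\tfrac12$ mod $\ZZ$, a value that cannot be corrected by any integral-coefficient modular form; the resulting class in $\underline{\underline{D}}^{\Gamma_1(N)}_{4}\otimes\QQ/\ZZ$ is therefore non-zero and must coincide with the unique non-trivial element of $\pi_6^S S^0$. When $|\dis\Lambda|$ is even, every odd-index coefficient becomes $2$--integral, and by a parallel analysis of the even-index coefficients the whole series becomes trivial modulo proper forms, yielding the vanishing of the class.

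The main obstacle I anticipate is the final step: making the ``cannot be corrected'' argument rigorous, i.e.\ verifying that the built-in proper-modular-form subtractions of the $f$--invariant formalism from \cite{Bodecker:2014vn} do not secretly trivialize the expression even when $|\dis\Lambda|$ is odd. A cleaner alternative would be to identify the assembled generating series directly with a known analytic representative of $f(\nu^2)$ from \cite{Laures:1999sh}, thereby reducing the entire corollary to a parity check on a single Fourier coefficient.
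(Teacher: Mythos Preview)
Your overall framework—feeding Theorem 1 into the formula from \cite{Bodecker:2014vn} and then analyzing the result modulo $\underline{\underline{D}}_4^{\Gamma_1(N)}$—matches the paper's. However, the heart of the argument is misplaced. You locate the obstruction in the constant summand $\tfrac{1}{2}|\dis\Lambda_d| = \tfrac{1}{2}d^{2}|\dis\Lambda|$, but this piece is in fact absorbed: modulo $\ZZ$ it agrees with $\tfrac{|\dis\Lambda|}{2}d^{3}$, and the associated $q$-series is a rational multiple of the weight-$4$ Eisenstein series $\widehat{G}_4^{(N)}$, hence lies in $\underline{\underline{D}}_4$. Your phrase ``cannot be corrected by any integral-coefficient modular form'' is exactly where this breaks—the indeterminacy allows \emph{rational} multiples of genuine modular forms, not just integral ones.

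The paper instead computes the Gauss sum $\sum_{\rho}\overline{\qu}_d(\rho)$ explicitly. Using a Smith normal form for the Gram matrix (so that a basis $\{u_1,u_2\}$ of $\Lambda^{\vee}$ satisfies $\{d_1u_1,d_2u_2\}\subset\Lambda$ with $d_1\mid d_2$ and $|d_1d_2|=|\dis\Lambda|$), one sums the resulting Faulhaber-type expressions and obtains a rational polynomial lift
\[
\sum_{\rho\in\dis\Lambda_d}\overline{\qu}_d(\rho)\;\equiv\; \alpha'\,d^{3}+\frac{|\dis\Lambda|}{4}\,d \pmod{\ZZ},\qquad 12\alpha'\in\ZZ.
\]
The cubic term is again absorbed by $\widehat{G}_4^{(N)}$, leaving precisely $|\dis\Lambda|$ times the known $q$-series representative of $f(\nu^{2})$; the class is then non-trivial iff $|\dis\Lambda|$ is odd. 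Your suggested ``cleaner alternative'' is thus exactly the route taken, but carrying it out requires this polynomial lift, which your $2$-adic heuristic does not supply: the Gauss sum is indeed $2$-integral at odd $d$ when $|\dis\Lambda|$ is odd, yet after stripping off the absorbable cubic part the residual linear term $\tfrac{|\dis\Lambda|}{4}d$ is not $2$-integral, and it is this term—not the constant $\tfrac{1}{2}$—that detects $\nu^{2}$.
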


Alas, all we can establish for lattices of higher rank is the following:

\begin{cor}\label{vanish}
For any positive definite even lattice $\Lambda$ of rank $r>2$, the element $\left[\Gamma_{\Lambda}\backslash G_{\Lambda}\right]\in\pi^{S}_{2r+2}S^{0}$  is in Adams--Novikov filtration $>2$.
\end{cor}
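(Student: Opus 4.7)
Since $2r+2$ is even, the element $[\Gamma_\Lambda\backslash G_\Lambda]\in\pi^S_{2r+2}S^0$ lies in an even stem, whereas the $1$-line of the Adams--Novikov spectral sequence is concentrated in odd stems (it detects the image of $J$). Hence any element of $\pi^S_{2r+2}S^0$ sits in filtration $\ge 2$ automatically, and Corollary~\ref{vanish} is equivalent to the vanishing of the Laures $f$-invariant of $[\Gamma_\Lambda\backslash G_\Lambda]$.

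The plan is to compute this $f$-invariant analytically by assembling the $\eta$-invariants provided by Theorem~1. According to \cite{Bodecker:2014vn}, for a principal $S^{1}$-bundle $P\to M$ over a parallelized manifold $M$, the $f$-invariant of $P$ can be realized as a divided congruence of weight $r+2$ modular forms whose $q$-expansion is built, coefficient by coefficient, from the adiabatic reduced $\eta$-invariants $\overline{\eta}(\eth_{d})$ of the Dirac operator on $M$ twisted by $\lambda^{d}$. Substituting the formula of Theorem~1 yields an explicit $q$-series whose $d$-th Fourier coefficient is congruent modulo $\ZZ$ to
\begin{equation*}
(\sign d)^{r}\sum_{\rho\in\dis\Lambda_{d}}\bigl(\tfrac{1}{2}-\overline{\qu}_{d}(\rho)\bigr),
\end{equation*}
up to a fixed normalization depending only on $r$.

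The remaining step, which is the main obstacle, is to recognize this $q$-series as the reduction modulo $\ZZ$ of the $q$-expansion of an integral modular form, which will force the divided congruence class to vanish. Concretely, this amounts to identifying the Gauss-type sums $\sum_{\rho}e^{2\pi i\overline{\qu}_{d}(\rho)}$ as Fourier coefficients of a classical theta-Eisenstein series of weight $r+2$ for an appropriate congruence subgroup $\Gamma_{1}(N)$, using Milgram reciprocity to control the global normalization, and then checking integrality in weight $r+2\ge 5$. The key point is that the corresponding weight-$4$ calculation at $r=2$ leaves a residual $\ZZ/2$-obstruction governed by $|\dis\Lambda|\bmod 2$, exactly matching the previous corollary; what needs to be established for $r>2$ is that no such obstruction survives, for instance because the half-integer contributions $|\dis\Lambda_{d}|/2$ assemble into the integral constant term of a genuine weight-$(r+2)$ modular form while the remaining sum realizes an integral Eisenstein-series contribution with denominators bounded by numerators of $|\dis\Lambda|$-multiples of Bernoulli quotients that are absorbed by the sum over $\dis\Lambda_{d}$.
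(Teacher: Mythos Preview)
Your reduction in the first two paragraphs is correct: since the stem is even, filtration $>2$ is equivalent to vanishing of the $f$--invariant, and Theorem~1 together with the formula of \cite{Bodecker:2014vn} is indeed the input. However, two genuine gaps remain.

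First, you pass directly from Theorem~1 to the $f$--invariant formula. In the paper this requires the intermediate step that the Chern--Simons correction term vanishes in the adiabatic limit (Proposition~\ref{Chern--Simons vanishes}); without it, the adiabatic reduced $\eta$--invariants of Theorem~1 are not literally what gets inserted into the formula of \cite{Bodecker:2014vn}. You should at least state and justify this.

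Second, and more seriously, your final paragraph is a program, not a proof. You propose to interpret the exponential Gauss sums $\sum_{\rho}e^{2\pi i\,\overline{\qu}_{d}(\rho)}$ via Milgram reciprocity and to recognize the resulting $q$--series as a theta--Eisenstein series whose integrality in weight $r+2\ge 5$ would force the divided congruence to vanish. None of this is carried out, and it is far more than what is needed. The paper's argument is entirely elementary and never passes through exponential sums: using the Smith normal form one writes down an explicit rational polynomial lift
\[
\sum_{\rho\in\dis\Lambda_{d}}\overline{\qu}_{d}(\rho)\ \equiv\ \alpha\,d^{r+1}+\beta\,d^{r}+\gamma\,d^{r-1}\pmod{\ZZ},
\]
with $2\beta\in\ZZ$ and $12\gamma\in\ZZ$ (these denominator bounds come from the closed formulas for $\sum_{k=1}^{n}k$ and $\sum_{k=1}^{n}k^{2}$). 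For $r\ge 3$ one has $d^{r}\equiv d^{r+1}\pmod 2$ and $d^{r-1}\equiv d^{r+1}\pmod{12}$, so the whole expression collapses to $(\alpha+\beta+\gamma)d^{r+1}$ modulo $\ZZ$. But the top--degree contribution $d^{r+1}$ produces precisely a weight--$(r+2)$ Eisenstein series, hence lies in $\underline{\underline{D}}_{r+2}^{\Gamma_{1}(N)}$ already, and the $f$--invariant is trivial. No theta series, no Milgram formula, no Bernoulli denominators are involved; the missing idea in your proposal is simply this polynomial--in--$d$ reduction.
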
 

\begin{rmk*} Note in particular that there are no non-trivial elements of Adams--Novikov filtration $>2$ in $\pi^{S}_{2r+2}S^{0}$ for $r\leq7$; hence Corollary \ref{vanish} becomes an honest vanishing theorem in this range.
\end{rmk*}

The rest of this paper is organized as follows: In Section \ref{nilmanifold}, we briefly set up our notation concerning lattices and introduce the nilmanifolds mentioned above. In Section \ref{representation theory}, we adopt a global point of view on the spinors,  constructing a complete orthonormal basis and describing the Dirac operator. Having this explicit description at our disposal, we then proceed to determine the contributions to the adiabatic limit (Section \ref{adiabatic limit}). Finally, the stable homotopy content is dealt with in Section \ref{stable stems} by combining Theorem 1 with our previous work \cite{Bodecker:2014vn}.


\section{A family of nilmanifolds}\label{nilmanifold}

\subsection{Lattices}
Let $\Lambda$ be a positive definite even lattice of rank $r$, i.e.\ a free  $\ZZ$--module $\Lambda$ of rank $r$ equipped with a nondegenerate symmetric bilinear form $\bil\colon\Lambda\times\Lambda\rightarrow \ZZ$ taking the diagonal to even (non-negative) values; we define the associated quadratic form as $\qu(x)={\textstyle{\frac{1}{2}}}\bil(x,x)$, hence
\begin{equation}\label{bilinear form}
\bil(x,y)=\qu(x+y)-\qu(x)-\qu(y).
\end{equation}
By $\Lambda^{\vee}$ we denote the  the dual lattice, i.e.\
\begin{equation*}
\Lambda^{\vee}=\left\{ v\in\Lambda\otimes_{\ZZ}\RR\colon \bil(v,x)\in\ZZ\ \forall\ x\in\Lambda\right\},
\end{equation*}
where the bilinear form has been extended in the natural way. Furthermore, recall that the discriminant group is the quotient group $\dis(\Lambda)=\Lambda^{\vee}/\Lambda$, and that the quadratic form descends to a well defined map
\begin{equation*}
\overline{\qu}\colon \dis(\Lambda)\rightarrow \QQ/\ZZ.
\end{equation*} 
For any nonzero integer $d$, let $\Lambda_{d}$ be the lattice corresponding to the rescaled form $\bil_{d}=d\cdot\bil$, and $\overline{\qu}_{d}$ the induced quadratic form on $\dis(\Lambda_{d})$.

\subsection{Nilmanifolds associated to lattices}
Observe that, due to \eqref{bilinear form}, the assignment
\begin{equation*}
a\mapsto\left((z',c',b')\mapsto(z'+\bil(a,c')+\qu(a)b',c'+ab',b')\right),
\end{equation*}
where $a\in\Lambda$ and $(z',c',b')\in\ZZ\oplus\Lambda\oplus\ZZ$, gives rise to a group homomorphism $\Lambda\rightarrow \mathrm{Aut}(\ZZ\oplus\Lambda\oplus\ZZ)$. Consequently, we may  form the semidirect product of abelian groups
\begin{equation*}
\Gamma_{\Lambda}=(\ZZ\oplus\Lambda\oplus\ZZ)\rtimes\Lambda;
\end{equation*}
extending naturally, we obtain a connected, simply connected real Lie group
\begin{equation*}
G_{\Lambda}=((\ZZ\oplus\Lambda\oplus\ZZ)\otimes_{\ZZ}\RR)\rtimes(\Lambda\otimes_{\ZZ}\RR).
\end{equation*}
Writing elements of this group as tuples, the group multiplication becomes
\begin{equation*}
\begin{split}
&(z, c,b, a)\cdot(z',c',b', a')\\
=&\left(z+z'+\bil(a, c')+\qu( a)b', c+ c'+ ab',b+b', a+ a'\right).
\end{split}
\end{equation*}

Specifying a basis $\{v_{1},\dots,v_{r}\}$, we may identify $\Lambda\cong\ZZ^{r}\subset\RR^{r}$; let us write $(\bil_{ij})$ for the Gram matrix of this basis, i.e.\ having components $\bil_{ij}=\bil(v_{i},v_{j})$. Furthermore, we define $2r+2$ left invariant vector fields by their action on smooth functions  $f\colon G_{\Lambda}\rightarrow\RR$,
\begin{equation*}
\begin{split}
Zf(g)&=\left.{\textstyle\frac{d}{dt}}\right|_{t=0}f\left(g\cdot(t,0,0,0)\right),\quad C_{j}f(g)=\left.{\textstyle\frac{d}{dt}}\right|_{t=0}f\left(g\cdot(0,tv_{j},0,0)\right),\\
Bf(g)&=\left.{\textstyle\frac{d}{dt}}\right|_{t=0}f\left(g\cdot(0,0,t,0)\right),\quad A_{i}f(g)=\left.{\textstyle\frac{d}{dt}}\right|_{t=0}f\left(g\cdot(0,0,0,tv_{i})\right);
\end{split}
\end{equation*}
clearly, these vector fields form a three-step nilpotent real Lie algebra $\mathfrak{g}_{\Lambda}$ subject to the relations
\begin{equation}
[A_{i},B]=C_{i},\quad [A_{i},C_{j}]=\bil_{ij}Z
\end{equation}
Now let us use boldface letters to denote $r$--tuples of real numbers, so that the identification $\RR^{r}\cong\Lambda\otimes\RR$ may be written as
$${\bf x}=(x_{1},\dots,x_{r})\mapsto\sum_{i=1}^{r}x_{i}v_{i};$$
then we obtain a diffeomorphism $\varphi\colon\RR\times\RR^{r}\times\RR\times\RR^{r}\rightarrow G_{\Lambda}$,
\begin{equation}\label{varphi}
\varphi\colon(z,{\bf{c}},b,{\bf{a}})\mapsto\left(z,{\textstyle{\sum_{j=1}^{r}c_{j}v_{j}}},b,{\textstyle{\sum_{i=1}^{r}a_{i}v_{i}}}\right),
\end{equation}
providing us with global coordinates.
{\em From now on, we  assume that a basis for the fixed lattice $\Lambda$ has been chosen; dropping the reference to the lattice, we simply denote our group by $G$, and tacitly identify it with $\RR^{2r+2}$ by means of the diffeomorphism $\varphi$; for brevity, pullbacks and pushforwards will be omitted from the notation as well, writing e.g.\ $\qu({\bf x})=\sum_{i,j} x_{i}\bil_{ij}x_{j}/2$ for the quadratic form.}
\begin{rmk}
Abstractly, the map $\varphi\colon\RR^{2r+2}\rightarrow G$ should be written as
\begin{equation*}
(z,{\bf{c}},b,{\bf{a}})\mapsto\exp(zZ)\left({\textstyle\prod_{j=1}^{r}}\exp(c_{j}C_{j})\right)\exp(bB)\left({\textstyle\prod_{i=1}^{r}}\exp(a_{i}A_{i})\right),
\end{equation*}
where $\exp\colon\mathfrak{g}\rightarrow G$ is the exponential map; put differently, we describe $G$ in so-called Malcev coordinates (cf.\ e.g.\ \cite{Corwin:1990ve}).
\end{rmk}
 
Forgetting the first coordinate gives rise to a Lie group homomorphism
\begin{equation}\label{quotient}
q\colon G\rightarrow\overline{G}\cong G/\mathcal{Z}(G),
\end{equation}
where $\mathcal{Z}(G)=\left\{\exp\left(zZ\right)\colon z\in\RR\right\}\cong\RR$ is the center of $G$.

Furthermore, the subset $\ZZ\times\ZZ^{r}\times\ZZ\times\ZZ^{r}\subset\RR\times\RR^{r}\times\RR\times\RR^{r}$ corresponds under the map \eqref{varphi} to a discrete subgroup $\Gamma\subset G$; clearly, $\Gamma$ is cocompact and torsion-free.

Putting $\overline{\Gamma}=q(\Gamma)$, we see that $q$ descends to the projection map of a right circle bundle $\Gamma\backslash G\rightarrow \overline{\Gamma}\backslash\overline{G}=:M$. Finally, let $\lambda$ be the complex line bundle associated via the standard representation, i.e.\ having total space
\begin{equation}
E(\lambda)=\left\{[\Gamma g, v]\colon (\Gamma g,v)\in \Gamma\backslash G\times\CC, (\Gamma g\cdot \exp(zZ),v)\sim(\Gamma g, e^{2\pi i z}v)\right\}.
\end{equation}

\subsection{Connections and fiber bundle structure}
By straightforward computation we obtain the following coordinate descriptions of the left invariant vector fields:
\begin{equation*}
\begin{split}
Zf(g)&=\partial_{z}f(g),\quad C_{j}f(g)=\left(\sum_{k}a_{k}\bil_{kj}\partial_{z}+\partial_{c_{j}}\right)f(g),\\
 B f(g)&=\left({\textstyle\frac{1}{2}}\sum_{i,j}a_{i}\bil_{ij}a_{j}\partial_{z}+\sum_{k}a_{k}\partial_{c_{k}}+\partial_{b}\right)f(g),\quad  A_{i}f(g)=\partial_{a_{i}}f(g).
\end{split}
\end{equation*}
The dual coframe fields are then:
\begin{equation*}
\begin{split}
Z^{*}&={\rm d}z-\sum_{k,l}a_{k}\bil_{kl}{\rm d}c_{l}+{\textstyle\frac{1}{2}}\sum_{i,j}a_{i}\bil_{ij}a_{j}{\rm d}b,\\
C^{*}_{j}&={\rm d}c_{j}-a_{j}{\rm d}b,\quad B^{*}={\rm d}b,\quad A^{*}_{i}={\rm d}a_{i}.
\end{split}
\end{equation*}
By construction, these left invariant objects descend to the nilmanifold $\Gamma\backslash G$; by abuse of notation, we still denote the induced vector fields by $Z$, $C_{j}$, $B$, $A_{i}$, and the induced forms still by $Z^{*}$, $C_{j}^{*}$, $B^{*}$, $A_{i}^{*}$.

Using the homomorphism \eqref{quotient}, we obtain $2r+1$ linearly independent vector fields on $M=\overline{\Gamma}\backslash{\overline G}$, viz.
\begin{equation*}
\overline{C}_{j}=q_{*}C_{j},\quad \overline{B}=q_{*}B, \quad \overline{A}_{i}=q_{*}A_{i}.
\end{equation*}
Note that since $\bil$ is symmetric, we can find an orthogonal matrix diagonalizing $(\bil_{ij})$ (over $\RR$); fixing such a matrix $O$, we have $O(\bil_{ij})O^{-1}=\diag(\nu_{1},\dots,\nu_{r})$; now put
$$X_{i}=\sum_{k}O_{ik}A_{k},\quad Y_{j}=[B,X_{j}]=\sum_{l} O_{jl}C_{l},$$
hence
$$[X_{i},Y_{j}]=\sum O_{ik}O_{jl}\bil_{kl}Z=\sum O_{ik}(O^{-1})_{lj}\bil_{kl}Z=\nu_{i}\delta_{ij}Z.$$
Now consider their projections $\overline{X}_{i}=q_{*}X_{i}$, $\overline{Y}_{j}=q_{*}Y_{j}$; the ordered set of vector fields  given by $\{\overline{B},\overline{X}_{1},\overline{Y}_{1},\dots,\overline{X}_{r},\overline{Y}_{r}\}$ determines an oriented global frame for $M$. Declaring this frame to be orthonormal, we obtain a Riemannian metric $\langle\cdot,\cdot\rangle$ on $M$.
\begin{rmk}
The orientation chosen above will be convenient later on; moreover, it does not depend on a choice of orientation for the lattice $\Lambda$.
\end{rmk}
\begin{rmk}
If one is unhappy to use diagonalization over $\RR$, there is an alternative: The commutator relation $[A_{i},C_{j}]=\bil_{ij}Z$ can be used to define a non-degenerate skew bilinear form $\Omega$ on the real vector space $(\Lambda\oplus\Lambda)\otimes\RR$ taking integer values on the lattice $\Lambda\oplus\Lambda$. Thus, at the expense of having to use (potentially) different integral basises for the two summands, we can bring $\Omega$ into the form $\begin{pmatrix}0&\nu'\\ -\nu'&0\end{pmatrix}$, such that $\nu'=\diag(\nu_{1}',\dots,\nu_{r}')$ and the non-zero integers $\nu_{i}'$ form a chain of divisors. While simplifying one set of commutator relations, this basis complicates the other set, i.e.\ those involving $B$, for which  reason we are not going to follow this route any further.
\end{rmk}

On the tangent bundle, we can introduce several connections compatible with the metric $\langle\cdot,\cdot\rangle$: Besides the tautological flat connection annihilating the frame, denoted $\nabla^{TM,0}$, we have  the Levi-Civita connection $\nabla^{TM,LC}$; using the Koszul formula, we see that the non-trivial Christoffel symbols are given by
\begin{equation}\label{Christoffel}
\begin{split}
&\langle\nabla^{TM,LC}_{\overline{X}_{i}}\overline{B},\overline{Y}_{j}\rangle=\langle\nabla^{TM,LC}_{\overline{Y}_{j}}\overline{B},\overline{X}_{i}\rangle=\langle\nabla^{TM,LC}_{\overline{B}}\overline{Y}_{j},\overline{X}_{i}\rangle=\frac{1}{2}\delta_{ij},\\
&\langle\nabla^{TM,LC}_{\overline{X}_{i}}\overline{Y}_{j},\overline{B}\rangle=\langle\nabla^{TM,LC}_{\overline{Y}_{j}}\overline{X}_{i},\overline{B}\rangle=\langle\nabla^{TM,LC}_{\overline{B}}\overline{X}_{i},\overline{Y}_{j}\rangle=-\frac{1}{2}\delta_{ij}.
\end{split}
\end{equation}

Now consider the hyperplane $\RR\times\RR^{r}\times\{0\}\times\RR^{r}\subset\RR\times\RR^{r}\times\RR\times\RR^{r}$;  under the map \eqref{varphi} it corresponds to a normal subgroup $H\subset G$ (isomorphic to a Heisenberg group), giving rise to fiber bundles
\begin{equation}
p\colon\Gamma\backslash G\rightarrow \Gamma\backslash G/H\cong\ZZ\backslash \RR\textrm{  and  }
\overline{p}\colon\overline{\Gamma}\backslash\overline{G}\rightarrow \ZZ\backslash\RR.
\end{equation}
Clearly, our trivialization provides a splitting of the tangent bundle into its horizontal and vertical subbundles,
\begin{equation}\label{splitting}
TM\cong T^{H}M\oplus T^{V}M\cong\overline{p}^{*}T(\ZZ\backslash\RR)\oplus T^{V}M;
\end{equation}
this decomposition is orthogonal with respect to the metric $\langle\cdot,\cdot\rangle$ and the map $\overline{p}$ becomes a Riemannian submersion onto the circle of unit length, each fiber having unit volume.
\begin{rmk} This exhibits another semidirect product description of the group $G$, viz.\ $G\cong\RR\ltimes H$.
\end{rmk}
However, the Levi-Civita connection does not preserve the splitting \eqref{splitting}; following \cite{Bismut:1989dk}, we can construct a metric-compatible connection $\nabla^{TM}$ that does (using the natural projections), resulting in:
\begin{equation}\label{modifiedconnection}
\begin{split}
\langle\nabla^{TM}_{\overline{B}}\overline{X}_{i},\overline{Y}_{j}\rangle&=-\frac{1}{2}\delta_{ij}=-\langle\nabla^{TM}_{\overline{B}}\overline{Y}_{j},\overline{X}_{i}\rangle,\\
\nabla^{TM}_{\overline{X}_{i}}\overline{B}&=0=\nabla^{TM}_{\overline{Y}_{j}}\overline{B},\\
\nabla^{TM}_{\overline{X}_{i}}\overline{Y}_{j}&=0=\nabla^{TM}_{\overline{Y}_{j}}\overline{X}_{i}.
\end{split}
\end{equation}
For future reference, let $S=\nabla^{TM,LC}-\nabla^{TM}$ be the difference tensor; in particular, observe that $S(\overline{X}_{i})\overline{X}_{i}=0=S(\overline{Y}_{j})\overline{Y}_{j}$.

Finally, note that the standard hermitian form on $\CC$ naturally induces a hermitian metric on the tautological line bundle $\lambda$; we equip it with the canonical connection $\nabla^{\lambda}$  (which is unitary by construction).


\section{Twisted spinors on $M$}\label{representation theory}
\subsection{The twisted Dirac operator}
Endowing $M$ with the trivial spin structure obtained from tautologically lifting the trivialized bundle of oriented orthonormal frames, we associate the bundle of untwisted spinors, $\Sigma(M)$. The Levi-Civita connection $\nabla^{TM,LC}$ together with the unitary connection $\nabla^{\lambda}$ on $\lambda$ induces a connection $\nabla^{LC}$ on $\Sigma(M)\otimes\lambda^{d}$, and, having specified a Clifford module structure, the twisted Dirac operator can be defined as $(\eth^{LC}\otimes\lambda^{d}) \psi=\sum_{k=0}^{2r}e_{k}\cdot\nabla^{LC}_{e_{k}}\psi$. On the other hand, the splitting of the tangent bundle into trivialized subbundles induces a decomposition  
\begin{equation*}
\Sigma(M)\cong\overline{p}^{*}\Sigma(\ZZ\backslash\RR)\otimes\Sigma^{V}(M)\cong M\times(\CC\otimes\Sigma_{2r});
\end{equation*}denoting by $\nabla$ the connection on $\overline{p}^{*}\Sigma(\ZZ\backslash\RR)\otimes\Sigma^{V}(M)\otimes\lambda^{d}$ induced by $\nabla^{TM}$ (and  $\nabla^{\lambda}$), one readily checks that the twisted Dirac operator can also be expressed as
\begin{equation*}
\eth^{LC}\otimes\lambda^{d}=:\eth=e_{0}\cdot\nabla_{e_{0}}+\eth^{V},
\end{equation*}
where $\eth^{V}=\sum_{k=1}^{2r}e_{k}\cdot\nabla_{e_{k}}$ restricts to the twisted Dirac operator on each fiber (cf.\ formula (4.26) of \cite{Bismut:1989dk} for the general case).

\begin{rmk}
For an explicit model of the Clifford module, consider the matrices 
\begin{equation*}
\sigma_{x}=\begin{pmatrix}0&-1\\1&0\end{pmatrix},\quad \sigma_{y}=\begin{pmatrix}0&i\\i&0\end{pmatrix}, \quad \sigma_{z}=i\sigma_{x}\sigma_{y}=\begin{pmatrix}1&0\\0&-1\end{pmatrix};
\end{equation*}
then $\sigma_{x}$, $\sigma_{y}$ act naturally on $\CC^{2}$ (from the left), generating the complex Clifford algebra $\CC l_{2}$; clearly, these matrices are antihermitian with respect to the usual hermitian product on $\CC^{2}$. Now we can turn $(\CC^{2})^{\otimes r}$ into a $\CC l_{2r}$ module by letting  $e_{2k-1},e_{2k}\in\{e_{1},\dots,e_{2r}\}$ act as $1^{\otimes(k-1)}\otimes \sigma_{x}\otimes\sigma_{z}^{\otimes(r-k)}$ and $1^{\otimes(k-1)}\otimes\sigma_{y}\otimes\sigma_{z}^{\otimes(r-k)}$, respectively; we denote this irreducible module by $\Sigma_{2r}$. Moreover, under the identification $\CC\otimes(\CC^{2})^{\otimes r}\cong (\CC^{2})^{\otimes r}$, $(-i)\otimes\sigma_{z}^{\otimes r}$ acts compatibly as an additional element $e_{0}$, so that  $(\CC^{2})^{\otimes r}$ also becomes a $\CC l_{2r+1}$ module $\Sigma_{2r+1}$ (on which the complex volume element $i^{r+1}e_{0}\dots e_{2r}$ acts as the identity). 
\end{rmk}

To make the Dirac operator (even) more explicit, we identify sections of the twisted spinor bundle with functions 
\begin{equation}
\psi\colon G\rightarrow \CC\otimes\Sigma_{2r}\otimes\CC\cong\Sigma_{2r+1}\otimes\CC\cong\Sigma_{2r+1}
\end{equation}
that are
\begin{itemize}
\item[(i)] invariant under the left regular action of  $\Gamma$ (i.e.\ descend to $\Gamma\backslash G$),
\item[(ii)] square integrable on a fundamental domain for $\Gamma$,
\item[(iii)] equivariant with respect to the center, 
\begin{equation}\label{center-equivariance}
\psi(g_{0}\cdot \exp(zZ))=e^{-2\pi idz}\psi(g_{0}).
\end{equation}
\end{itemize}
Clearly, the space of spinors is a $G$--module under the right regular action, $g\cdot\psi(g_{0})=\psi(g_{0}g)$, and we have
\begin{equation}\label{Dirac operator}
\eth\psi=e_{0}\cdot \nabla_{e_{0}}\psi+\sum_{k=1}^{r}\left(e_{2k-1}\cdot (X_{k})_{*}+e_{2k}\cdot(Y_{k})_{*}\right)\psi,
\end{equation}
where 
\begin{equation}\label{basederivative}
\nabla_{e_{0}}\psi=(B)_{*}\psi-{\textstyle\frac{1}{4}}\sum_{k=1}^{r}e_{2k-1}e_{2k}\cdot\psi
\end{equation}
and the lower star indicates the derived right regular action.

\subsection{Decomposition of the space of spinors}

In order to decompose the space of spinors, we gather some information on the representations of the group $G$:
 \begin{prop}
The equivalence classes of irreducible unitary representations of $G$ with non-trivial action of the center may be parameterized by the elements $\mu=\delta Z^{*}+\beta B^{*}\in\mathfrak{g}^{*}$ with $\delta\neq0$. Moreover, the corresponding representation may be modelled on $L^{2}(\RR^{r},\CC)$, where the action on $\bar f\colon\RR^{r}\rightarrow\CC$ is given by  
$$(z,{\bf c}, b,{\bf a})\cdot\bar f({\bf x})=e^{2\pi i\delta (z+\bil({\bf x},{\bf c})+\qu({\bf x})b)}e^{2\pi i \beta b}\bar f({\bf x+ a}).$$
\end{prop}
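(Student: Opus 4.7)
The plan is to invoke the Kirillov orbit method for simply connected nilpotent Lie groups \cite{Corwin:1990ve}: equivalence classes of irreducible unitary representations of $G$ correspond bijectively to $G$-orbits in the coadjoint representation on $\mathfrak{g}^{*}$, and those with non-trivial central character correspond precisely to orbits $G\cdot\mu$ with $\mu(Z)\neq 0$.

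First I would classify the coadjoint orbits. Using the only non-trivial brackets $[A_{i},B]=C_{i}$ and $[A_{i},C_{j}]=\bil_{ij}Z$ together with the terminating BCH series (since $\mathfrak{g}$ is three-step nilpotent), a direct computation shows that on an element with $\mu(Z)=\delta\neq 0$, conjugation by $\exp(\sum a_{i}A_{i})$ shifts the $C^{*}$-component by $-\delta\,\bil\mathbf{a}$ and the $B^{*}$-component by $\delta\,\qu(\mathbf{a})$, while conjugation by $\exp(\sum c_{j}C_{j})$ shifts the $A^{*}$-component by $\delta\,\bil\mathbf{c}$; the elements $\exp(bB)$ and $\exp(zZ)$ fix such a $\mu$. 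Since $\bil$ is non-degenerate and $\delta\neq 0$, one can choose $\mathbf{a},\mathbf{c}$ to bring any such $\mu$ into the form $\delta Z^{*}+\beta B^{*}$, and the dimension count shows the orbit has dimension $2r$, so $\beta$ is an orbit invariant and the orbits are parameterized by $(\delta,\beta)\in(\RR\setminus\{0\})\times\RR$.

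Next I would construct the corresponding representation via a real polarization. The subspace $\mathfrak{h}=\spr\{Z,C_{1},\dots,C_{r},B\}$ is abelian -- every non-trivial commutator involves an $A_{i}$ -- hence trivially subordinate to every $\mu$, and $\dim\mathfrak{h}=r+2=\dim G-\tfrac{1}{2}\dim(G\cdot\mu)$, so it is a polarization. Its exponential is the closed abelian subgroup $P=\{(z,\mathbf{c},b,0)\}\subset G$, on which $\chi_{\mu}(z,\mathbf{c},b,0)=e^{2\pi i(\delta z+\beta b)}$ is a unitary character. The global decomposition $(z,\mathbf{c},b,\mathbf{a})=(z,\mathbf{c},b,0)\cdot(0,0,0,\mathbf{a})$ realizes the induced representation on $L^{2}(\RR^{r},\CC)$ via $\bar f(\mathbf{x})\leftrightarrow f((0,0,0,\mathbf{x}))$; the stated formula then drops out by computing
\begin{equation*}
(0,0,0,\mathbf{x})\cdot(z,\mathbf{c},b,\mathbf{a})=(z+\bil(\mathbf{x},\mathbf{c})+\qu(\mathbf{x})b,\,\mathbf{c}+\mathbf{x}b,\,b,\,\mathbf{x}+\mathbf{a})
\end{equation*}
and reading off $\chi_{\mu}$ of its $P$-component.

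The main technical obstacle is the cocycle bookkeeping in Malcev coordinates: the cubic term $\qu(\mathbf{a})b'$ in the multiplication law -- which distinguishes this three-step nilpotent setting from the two-step Heisenberg case of \cite{Deninger:1984zt} -- must be carried consistently through both the coadjoint computation and the verification that the formula really does define a representation. The latter reduces to the identity $\qu(\mathbf{x}+\mathbf{a})=\qu(\mathbf{x})+\bil(\mathbf{x},\mathbf{a})+\qu(\mathbf{a})$, which matches the quadratic cocycles on the two sides of $(gg')\cdot\bar f=g\cdot(g'\cdot\bar f)$. As a cross-check on irreducibility, restriction to the normal Heisenberg subgroup $\{(z,\mathbf{c},0,\mathbf{a})\}\subset G$ yields the Schr\"odinger model of the Stone--von Neumann representation of parameter $\delta$, which is already irreducible, so Kirillov's theorem then guarantees that the $\pi_{\mu}$ exhaust all equivalence classes with non-trivial center action.
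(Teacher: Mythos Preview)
Your proposal is correct and follows essentially the same route as the paper: Kirillov orbit method, explicit coadjoint computation to reduce to representatives $\delta Z^{*}+\beta B^{*}$, the same abelian polarization $\mathfrak{p}=\spr\{Z,C_{1},\dots,C_{r},B\}$, and the induced representation realized on $L^{2}(\RR^{r},\CC)$ via the Malcev factorization $(0,0,0,\mathbf{x})\cdot(z,\mathbf{c},b,\mathbf{a})$. One small imprecision: the shift of the $B^{*}$-component under $\exp(\sum a_{i}A_{i})$ is $\delta\,\qu(\mathbf{a})-\sum_{k}a_{k}\gamma_{k}$, not just $\delta\,\qu(\mathbf{a})$; this does not affect your conclusion, but be careful when asserting that $\beta$ is an orbit invariant---the clean way is to note that preserving the vanishing of the $C^{*}$-component forces $\mathbf{a}=0$, whence the $B^{*}$-component is fixed. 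Your Stone--von~Neumann cross-check is a nice addition not present in the paper.
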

\begin{proof}
Recall that any Lie group $G$ acts on the dual of its Lie algebra, $\mathfrak{g}^{*}$, via the coadjoint action
\begin{equation*}
Ad^{*}_{g}\left(\mu\left(X\right)\right)=\mu\left(Ad_{g^{-1}}(X)\right)
\end{equation*}
and that, in the 1--connected nilpotent case, Kirillov theory sets up a bijection between coadjoint orbits and equivalence classes of irreducible unitary representations (see e.g.\ \cite{Corwin:1990ve}). In the situation at hand, we compute
\begin{equation*}
\begin{split}
Ad_{g^{-1}}Z&=Z,\\
Ad_{g^{-1}}C_{j}&=-\sum_{k}a_{k}\bil_{kj}Z+C_{j},\\
Ad_{g^{-1}}B&={\textstyle\frac{1}{2}}\sum_{i,j}a_{i}\bil_{ij}a_{j}Z-\sum_{k}a_{k}C_{k}+B,\\
Ad_{g^{-1}}A_{i}&=\sum_{k}\bil_{ik}(c_{k}-a_{k}b)Z+bC_{i}+A_{i},
\end{split}
\end{equation*}
from which we deduce
\begin{equation*}
\begin{split}
&Ad_{g}^{*}\left(\delta Z^{*}+\sum_{j}\gamma_{j}C_{j}^{*}+\beta B^{*}+\sum_{i}\alpha_{i}A_{i}^{*}\right)\\
&=\delta Z^{*}+\sum_{j}\left(\gamma_{j}-\sum_{k}a_{k}\bil_{kj}\delta\right)C^{*}_{j}\\
&\quad+\left(\beta-\sum_{k}a_{k}\gamma_{k}+{\textstyle\frac{1}{2}}\sum_{i,j}a_{i}\bil_{ij}a_{j}\delta\right)B^{*}\\
&\quad+\sum_{i}\left(\alpha_{i}+b\gamma_{i}+\sum_{k}\bil_{ik}(c_{k}-a_{k}b)\delta\right)A_{i}^{*}.
\end{split}
\end{equation*}
Orbit representatives for $\delta\neq0$ may be taken as $\{\mu=\delta Z^{*}+\beta B^{*}, \delta\neq0\}$, and the abelian ideal $\mathfrak{p}=\spr\{Z,C_{1},\dots,C_{r},B\}$ is a polarizing subalgebra for $\mu$, i.e.\ it is maximally isotropic  for the skew bilinear form $\mu\left([\cdot,\cdot]\right)$. Therefore, putting $\chi_{\mu}(\exp(U))=e^{2\pi i\mu(U)}$ for $U\in{\mathfrak p}$, we obtain a one-dimensional representation of $P=\exp(\mathfrak{p})$. This representation then induces a representation of $G$, viz.\ we consider the space of $P$--equivariant functions $f\colon G\rightarrow\CC$, $f(pg)=\chi_{\mu}(p)f(g)$, that are square integrable on $P\backslash G$, and let $G$ act as follows: $g\cdot f(g_{0})=f(g_{0}g)$. Since $(z,{\bf c},b,{\bf a})=(z,{\bf c},b,0)\cdot(0,0,0,{\bf a})$, such functions $f$ may be identified with square integrable functions $\bar f\colon \RR^{r}\cong P\backslash G\rightarrow \CC$, and due to
\begin{equation*}
\begin{split}
&\quad(0,0,0,{\bf x})\cdot(z,{\bf c},b,{\bf a})\\
&=(z+\bil({\bf x,c})+\qu({\bf x})b,{\bf c+x}b,b,{\bf x+a})\\
&=(z+\bil({\bf x,c})+\qu({\bf x})b,{\bf c+x}b,b,0)\cdot(0,0,0,{\bf x+a}),
\end{split}
\end{equation*}
the right regular action on $f\in L^{2}(G,\CC)$ translates into the action
\begin{equation}
(z,{\bf c},b,{\bf a})\cdot\bar f({\bf x})=e^{2\pi i\delta (z+\bil({\bf x},{\bf c})+\qu({\bf x})b)}e^{2\pi i \beta b}\bar f({\bf x+ a})
\end{equation}
on $\bar f\in L^{2}(\RR^{r},\CC)$.
\end{proof}
Due to the equivariance requirement \eqref{center-equivariance}, the relevant situation is that of $\delta=-d$ for some non-zero integer $d$, and the straightforward computation of the derived right regular action yields:
\begin{equation}\label{derived}
\begin{split}
(A_{i})_{*}\bar f ({\bf x})&=\frac{\partial}{\partial x_{i}}\bar f({\bf x}),\quad (B)_{*}\bar f ({\bf x})=2\pi i(\beta-d\qu({\bf x})) \bar f({\bf x}),\\
(C_{j})_{*}\bar f ({\bf x})&=-2\pi i d\sum_{i}x_{i}\bil_{ij}\bar f({\bf x}),\quad (Z)_{*}\bar f({\bf x})=-2\pi i d \bar f({\bf x}).
\end{split}
\end{equation}
It is well-known that the Hermite functions form a complete orthonormal basis for the space of square integrable functions; we find it convenient to describe them as follows: Using our previously defined tangent vector fields $X_{i}$, $Y_{j}$, we introduce operators
$$\mathcal{A}_{k}=\left(X_{k}+i\sign(d)Y_{k}\right)_{*},\quad {\mathcal{A}_{k}}^{\dagger}=\left(i\sign(d)Y_{k}-X_{k}\right)_{*},$$
which are (rescaled versions of) annihilation and creation operators, respectively, i.e.\ they satisfy
$$\left[\mathcal{A}_{k},{\mathcal{A}_{l}}^{\dagger} \right]=4\pi|d|\nu_{k}\delta_{kl},\quad \left[\mathcal{A}_{k},\mathcal{A}_{l} \right]=0=\left[{\mathcal{A}_{k}}^{\dagger},{\mathcal{A}_{l}}^{\dagger} \right]$$
and therefore
$$\left[{\mathcal{A}_{k}}^{\dagger}\mathcal{A}_{k},{\mathcal{A}_{l}}^{\dagger} \right]=4\pi|d|\nu_{k}\delta_{kl}{\mathcal{A}_{k}}^{\dagger},\quad \left[{\mathcal{A}_{k}}^{\dagger}\mathcal{A}_{k},\mathcal{A}_{l} \right]=-4\pi|d|\nu_{k}\delta_{kl}\mathcal{A}_{k}.$$
Consider the smooth function $\phi_{0}({\bf x})=\exp(-2\pi|d|\qu({\bf x}))$, which is annihilated by all of the $\mathcal{A}_{k}$; then, for each $r$--tuple of non-negative integers ${\bf n}=(n_{1},\dots,n_{r})\in\NN_{0}^{r}$, applying each creation operator ${\mathcal{A}_{k}}^{\dagger}$ $n_{k}$ times, we obtain a smooth function $\phi_{{\bf n}}$ that is a simultaneous eigenvector for all `number operators', i.e.\ ${\mathcal{A}_{k}}^{\dagger}\mathcal{A}_{k}\phi_{{\bf n}}=4\pi|d|\nu_{k}n_{k}\phi_{{\bf n}}$. Normalizing with respect to the $L^{2}$ scalar product, i.e.\ putting $h_{{\bf n}}({\bf x})=\phi_{{\bf n}}({\bf x})/\sqrt{||\phi_{{\bf n}}||^{2}}$, we clearly have $\langle h_{{\bf n}}, h_{{\bf n'}}\rangle_{L^{2}}=\delta_{{\bf n\, n'}}$, and one easily proves completeness as well.

Next, we decompose the spinor module $\Sigma_{2r}$ using simultaneous eigenvectors (of unit norm) of the $r$ involutions $\omega_{k}=ie_{2k-1}e_{2k}$; letting $s_{k}\in\{\pm1\}$ be the eigenvalue of $\omega_{k}$, we find it convenient to label the elements of such a fixed basis by their `chirality vector' ${\bf s}=(s_{1},\dots,s_{r})\in\{\pm1\}^{r}$.

Finally, given an even positive definite lattice $\Lambda$ and a non-zero integer $d$, let $R_{d}$ denote a fixed choice of lift of the discriminant $\dis\Lambda_{d}$, i.e.\ a subset $R_{d}\subset(\Lambda_{d})^{\vee}$ containing precisely one representative $\tilde\rho$  for each class $\rho\in\dis\Lambda_{d}$.

Given an element
$$({\bf n}, {\bf s}, \tilde\rho, \overline{\beta})\in \NN^{r}_{0}\times\{\pm1\}^{r}\times R_{d}\times\ZZ$$
we can construct a smooth twisted spinor as follows: 
We take the function $h_{{\bf n}}({\bf x})$, considered as a vector in the $G$ representation determined by the form $\mu=-dZ^{*}+(\overline{\beta}+\qu_{d}(\tilde\rho))B^{*}$, identify it with a $P$--equivariant function on $G$, say $$\Phi_{({\bf n},\tilde\rho,\overline{\beta})}(z,{\bf c},b,{\bf a})=e^{-2\pi idz}e^{2\pi i (\overline{\beta}+\qu_{d}(\tilde\rho))b}h_{{\bf n}}({\bf a});$$
taking the left translate by an element $(0,0,0,{\bf k}+\tilde\rho)$ then results in a function
\begin{equation*}
\begin{split}
&\quad \left(L_{(0,0,0,{\bf k}+\tilde\rho)}^{*}\Phi_{({\bf n},\tilde\rho,\overline{\beta})}\right)(z,{\bf c},b,{\bf a})\\
&=e^{-2\pi id(z+\bil({\bf k}+\tilde\rho,{\bf c})+\qu({\bf k}+\tilde\rho)b)}e^{2\pi i (\overline{\beta}+\qu_{d}(\tilde\rho))b}h_{{\bf n}}({\bf a+k}+\tilde\rho)\\
&=e^{-2\pi id(z+\bil({\bf k}+\tilde\rho,{\bf c})+\qu({\bf k})b+\bil({\bf k},\tilde{\rho})b)}e^{2\pi i \overline{\beta}b}h_{{\bf n}}({\bf a+k}+\tilde\rho).
\end{split}
\end{equation*}
This function does not descend to the quotient $\Gamma\backslash G$, but this can be remedied easily by summing over translates; tensoring with the  basis vector for the Clifford module of the specified chirality, we then obtain a smooth spinor
\begin{equation}\label{ON-spinor}
\psi_{({\bf n},{\bf s},\tilde\rho,{\overline\beta})}(z,{\bf c},b,{\bf a}):=\sum_{{\bf k}\in\ZZ^{r}\cong\Lambda}\left(L_{(0,0,0,{\bf k}+\tilde\rho)}^{*}\Phi_{({\bf n},\tilde\rho,\overline{\beta})}\right)(z,{\bf c},b,{\bf a})\otimes e_{{\bf s}}
\end{equation}
of unit norm (with respect to the $L^{2}$ scalar product on spinors). Conversely, we have:

\begin{prop}
For any non-zero integer $d$ and fixed choice of $R_{d}$, the space of twisted spinors can be decomposed \textup{(}as a $G$--$\CC l_{2r+1}$ bimodule\textup{)} into  a countable sum of Hilbert spaces
$$L^{2}(M,\Sigma(M)\otimes\lambda^{d})\cong{\bigoplus}_{\tilde\rho\in R_{d},\ \beta-\qu_{d}(\tilde\rho)\in\ZZ}\mathcal{H}_{\tilde\rho,\beta}$$
where for each $\tilde\rho\in R_{d}$, we have $\mathcal{H}_{\tilde\rho,\beta}\cong L^{2}(\RR^{r},\CC)\otimes\Sigma_{2r+1}$, the $G$--action on $L^{2}(\RR^{r},\CC)$ being determined by $\mu=-dZ^{*}+\beta B^{*}$.
\end{prop}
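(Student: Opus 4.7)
The plan is to combine the Kirillov model of the preceding Proposition with an explicit Fourier analysis in the two torus directions ${\bf c}$ and $b$ of a fundamental domain for $\Gamma$ in $G$. The orthonormal system $\{\psi_{({\bf n},{\bf s},\tilde\rho,\overline\beta)}\}$ constructed in \eqref{ON-spinor} will be the concrete basis realizing the decomposition; I would establish the result in three stages, verifying (a) that each $\psi_{({\bf n},{\bf s},\tilde\rho,\overline\beta)}$ is a bona fide twisted spinor and that the system is orthonormal, (b) completeness via Fourier expansion, and (c) that the $G$-action on each summand matches the Kirillov model at $\mu=-dZ^{*}+\beta B^{*}$.

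For (a), properties (ii) and (iii) are built into the construction, and $\Gamma$-invariance is checked on generators: the shifts $(1,0,0,0)$, $(0,v_j,0,0)$ and $(0,0,1,0)$ leave the Poincar\'e sum invariant because $d,\overline\beta\in\ZZ$ together with $\qu_d(\tilde\rho)\in\frac{1}{d}\ZZ$ make all of the phases coming from the group law cancel, while invariance under $(0,0,0,v_i)$ is the very reason for the summation $\sum_{{\bf k}\in\Lambda}$. Orthogonality across different tuples $({\bf n},{\bf s},\tilde\rho,\overline\beta)$ then reduces to orthogonality of Fourier modes in $b$ and ${\bf c}$, of Hermite functions in ${\bf a}$, and of the chirality basis $\{e_{\bf s}\}$ in $\Sigma_{2r}$.

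For (b), I would Fourier-expand a general twisted spinor: by (iii) write $\psi(z,{\bf c},b,{\bf a})=e^{-2\pi idz}\psi_0({\bf c},b,{\bf a})$, and the shifts $(0,v_j,0,0)$, $(0,0,1,0)$ force $\psi_0$ to be $\Lambda$-periodic in ${\bf c}$ and $\ZZ$-periodic in $b$; hence
\begin{equation*}
\psi_0({\bf c},b,{\bf a})=\sum_{\beta'\in\ZZ,\,{\bf m}\in\Lambda^{\vee}}c_{\beta',{\bf m}}({\bf a})\,e^{2\pi i\beta'b}e^{2\pi i\bil({\bf m},{\bf c})}.
\end{equation*}
The remaining constraint from $(0,0,0,v_i)$ translates, after comparing Fourier coefficients, into the recursion
\begin{equation*}
c_{\beta',{\bf m}}({\bf a})=c_{\beta'-d\qu(v_i)-\bil({\bf m},v_i),\,{\bf m}+dv_i}({\bf a}+v_i).
\end{equation*}
The induced action on ${\bf m}$ is free, with orbit space $\Lambda^{\vee}/d\Lambda\cong\dis\Lambda_d$ and representatives $d\tilde\rho$ ($\tilde\rho\in R_d$); iterating the recursion along such an orbit shows that the whole Fourier data reduces to a single function $F({\bf a}):=c_{\beta'_0,d\tilde\rho}({\bf a})$ via $c_{\beta'_{\bf k},d({\bf k}+\tilde\rho)}({\bf a})=F({\bf a}-{\bf k})$, with the $\beta'$-values along the orbit accumulating as $\beta'_{\bf k}=\beta'_0-\qu_d({\bf k}+\tilde\rho)+\qu_d(\tilde\rho)\in\ZZ$ by evenness of $\Lambda_d$. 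Setting $\beta:=\beta'_0+\qu_d(\tilde\rho)$ yields precisely the indexing constraint $\beta-\qu_d(\tilde\rho)\in\ZZ$. Unfolding the sum over ${\bf k}$ into an integral over $\RR^r$ then gives the Plancherel identity $\|\psi\|^2_{L^2}=\sum_{\tilde\rho,\beta}\|F_{\tilde\rho,\beta}\|^2_{L^2(\RR^r)}$, and expanding each seed $F_{\tilde\rho,\beta}$ in the Hermite basis $\{h_{\bf n}\}$ together with the chirality basis $\{e_{\bf s}\}$ recovers exactly the system \eqref{ON-spinor}.

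Step (c) then amounts to comparing the right regular action of $G$ on a Poincar\'e sum \eqref{ON-spinor} with the derived formulae \eqref{derived}: both act on the seed $F_{\tilde\rho,\beta}$ by translation in ${\bf a}$ with the same multiplier phases, which is precisely the Kirillov action at $\delta=-d$ and shifted $\beta=\overline\beta+\qu_d(\tilde\rho)$. The main technical difficulty is the careful bookkeeping of the phase $\qu_d(\tilde\rho)$ that distinguishes $\beta$ from $\overline\beta\in\ZZ$: a change of representative $\tilde\rho\mapsto\tilde\rho+\lambda$ with $\lambda\in\Lambda$ shifts $\qu_d(\tilde\rho)$ by an integer, so the summand $\mathcal{H}_{\tilde\rho,\beta}$ depends only on the class $\rho\in\dis\Lambda_d$ and on $\beta\in\overline{\qu}_d(\rho)+\ZZ$; ensuring that the Poincar\'e summation in \eqref{ON-spinor} produces a single $L^2$-normalized spinor per Hermite basis vector (rather than an over- or undercount) is the only genuinely delicate step.
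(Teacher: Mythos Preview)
Your argument is correct and follows the same strategy as the paper: Fourier expand a general twisted spinor in the torus variables $b$ and ${\bf c}$, then exploit left $\Gamma$--invariance in the ${\bf a}$--direction to obtain a recursion that reduces the Fourier data to a single $L^{2}(\RR^{r})$--function per orbit in $\Lambda^{\vee}/d\Lambda\cong\dis\Lambda_{d}$, recovering the Poincar\'e series \eqref{ON-spinor}. The paper is considerably more terse and parametrizes the ${\bf c}$--modes by $\gamma\in(\Lambda_{d})^{\vee}$ via the bijection $-\bil_{d}\colon(\Lambda_{d})^{\vee}\to\ZZ^{r}$ rather than your ${\bf m}\in\Lambda^{\vee}$; the two are equivalent under ${\bf m}=-d\gamma$, and the paper's recursion $\hat\psi_{\gamma,\beta'}({\bf a}+{\bf k})=\hat\psi_{\gamma+{\bf k},\,\beta'-\bil_{d}(\gamma,{\bf k})-\qu_{d}({\bf k})}({\bf a})$ is precisely yours in this reparametrization.

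One small correction: your claim $\qu_{d}(\tilde\rho)\in\frac{1}{d}\ZZ$ in step~(a) is false in general (already for $\Lambda=A_{1}$, $d=1$, $\tilde\rho=\frac{1}{2}$ one has $\qu_{1}(\tilde\rho)=\frac{1}{4}$). Fortunately it is not needed: invariance of \eqref{ON-spinor} under $(0,0,1,0)$ holds because $d\qu({\bf k}+\tilde\rho)=\qu_{d}({\bf k})+\bil_{d}({\bf k},\tilde\rho)+\qu_{d}(\tilde\rho)\equiv\qu_{d}(\tilde\rho)\pmod{\ZZ}$ (using evenness of $\Lambda$ and $\tilde\rho\in(\Lambda_{d})^{\vee}$), so the phases $e^{-2\pi i d\qu({\bf k}+\tilde\rho)}$ and $e^{2\pi i\qu_{d}(\tilde\rho)}$ cancel exactly, leaving $e^{2\pi i\overline\beta}=1$.
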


\begin{proof} Consider an arbitrary spinor $\psi\colon G\rightarrow \Sigma_{2r+1}$; our assumptions imply that $\psi$ admits a Fourier expansion of the form
\begin{equation*}
\begin{split}
\psi((z,{\bf c},b,{\bf a}))&=\sum_{\beta'\in\ZZ,\gamma'\in\ZZ^{r}}\hat\psi_{\gamma',\beta'}'({\bf a})e^{2\pi i(-dz+\beta' b+\sum_{j}\gamma_{j}' c_{j})}\\
&=\sum_{\beta'\in\ZZ,\gamma\in(\Lambda_{d})^{\vee}}\hat\psi_{\gamma,\beta'}({\bf a})e^{2\pi i(-dz+\beta' b-\bil_{d}(\gamma,{\bf c}))}
\end{split}
\end{equation*}
where we interpreted $-\bil_{d}$ as a bijection from the set  $(\Lambda_{d})^{\vee}$ to $\ZZ^{r}$, the primed and unprimed coefficients being related in the obvious way. 
Furthermore, the invariance requirement $\psi((0,0,0,{\bf k})\cdot(z,{\bf c},b,{\bf a}))=\psi((z,{\bf c},b,{\bf a}))$ for ${\bf k}\in\ZZ^{r}$ then imposes a constraint on the Fourier coefficients, viz.\
$$\hat\psi_{\gamma,\beta'}({\bf a+k})=\hat\psi_{\gamma+{\bf k},\beta'-\bil_{d}(\gamma,{\bf k})-\qu_{d}({\bf k})}({\bf a}),$$
where  any ${\bf k}$ occuring in the subscript is to be understood as an element in $\Lambda_{d}\subset(\Lambda_{d})^{\vee}$. Hence for fixed $\beta'\in\ZZ$, $\tilde\rho\in R_{d}$, the Fourier coefficient $\hat\psi_{\tilde\rho,\beta'}$ gives rise to a function in $L^{2}(\RR^{r},\Sigma_{2r+1})$; by comparison to the construction leading to \eqref{ON-spinor}, the claim now follows. 
\end{proof}

Using the isomorphism of the previous proposition, we are now going to write $\bar{\psi}_{({\bf n},{\bf s},\tilde\rho,{\overline\beta})}({\bf x})\in L^{2}(\RR^{r},\CC)\otimes\Sigma_{2r+1}$ for the spinor constructed in \eqref{ON-spinor} (which is consistent with the notation in the expressions \eqref{derived} for the derived action); then the covariant derivative \eqref{basederivative} of such a spinor is given by
\begin{equation*}
\nabla_{e_{0}}\bar\psi_{({\bf n},{\bf s},\tilde\rho,\overline{\beta})}({\bf x})=2\pi i\left(\beta-\left(\sign(d)\qu_{|d|}({\bf x})-{\textstyle \frac{1}{8\pi}\sum_{k} s_{k}}\right)\right)\bar\psi_{({\bf n},{\bf s},\tilde\rho,\overline{\beta})}({\bf x}),
\end{equation*}
where $\beta=\overline{\beta}+\qu_{d}(\tilde\rho)$. Note that the quadratic term can be expressed in terms of annihilation and creation operators: Using the diagonalizing property of the matrix $O$
\begin{equation*}
\qu({\bf x})=\frac{1}{2}\sum_{i,j}x_{i}\bil_{ij}x_{j}=\frac{1}{2}\sum_{l,m,n}\nu_{l}(O_{lm}x_{m})(O_{ln}x_{n})
\end{equation*}
and the identity
\begin{equation*}
\left(\mathcal{A}_{i}+{\mathcal{A}_{i}}^{\dagger}\right)\bar\psi_{({\bf n},{\bf s},\tilde\rho,\overline{\beta})}({\bf x})=4\pi|d|\sum_{j}\nu_{i}O_{ij}x_{j}\bar\psi_{({\bf n},{\bf s},\tilde\rho,\overline{\beta})}({\bf x}),
\end{equation*}
it follows that
\begin{equation}\label{Q decomposed}
\qu({\bf x})\bar\psi_{({\bf n},{\bf s},\tilde\rho,\overline{\beta})}({\bf x})=\frac{1}{2}\sum_{k}\frac{\left(\mathcal{A}_{k}+{\mathcal{A}_{k}}^{\dagger}\right)^{2}}{\left(4\pi |d|\right)^{2}\nu_{k}}\ \bar\psi_{({\bf n},{\bf s},\tilde\rho,\overline{\beta})}({\bf x}).
\end{equation}
Here is another important observation: The vertical spin Laplacian is given by 
\begin{equation}\label{vertical laplacian}
(\eth^{V})^{2}=\sum_{k}\left({\mathcal{A}_{k}}^{\dagger}\mathcal{A}_{k}+2\pi|d|\nu_{k}(1-\sign(d)\omega_{k})\right),
\end{equation}
so that zero eigenvalues necessarily have  chirality ${\bf s}=\sign(d)(1,\dots,1)$.


\section{The adiabatic limit}\label{adiabatic limit}
Let us briefly recall some notations concerning the adiabtic limit (cf.\ \cite{Bismut:1989dk,Dai:1991os,Berline:2004pb}): First of all, for any vector bundle $E$  over the total space of the fiber bundle ${\overline p}\colon M\rightarrow \ZZ\backslash\RR$, one can associate a vector bundle  $\tilde E$ over the base $\ZZ\backslash\RR$, the fiber of which is the space of smooth sections of the restriction of $E$ to the fiber of $\overline p$. In particular, we can consider the vertical spinor bundle $\Sigma^{V}M\otimes\lambda^{d}$; its associated bundle will be denoted by  $H_{\infty}$; the connection $\nabla$ on $\Sigma^{V}M\otimes\lambda^{d}$ induces a connection $\tilde\nabla$ on $H_{\infty}$ (since a section $\tilde\psi$ of $H_{\infty}$ may be identified with  a section $\psi$ of $\Sigma^{V}M\otimes\lambda^{d}$, we may put $\tilde\nabla_{{\overline p}_{*}\overline{B}}\tilde\psi=\nabla_{\overline B}\psi$.)

Moreover, this bundle is $\ZZ/2$--graded (by the action of $\omega_{1}\cdot...\cdot\omega_{r}$), and we can define the so-called Levi-Civita superconnection $\mathbb{A}_{t}$ associated to the family of vertical Dirac operators $\eth^{V}$; since the bundle is flat, and since $\tilde\nabla$ is unitary (recall that $S(\overline{X}_{i})\overline{X}_{i}=0=S(\overline{Y}_{j})\overline{Y}_{j}$), this superconnection simplifies to  $\mathbb{A}_{t}=t^{1/2}\eth^{V}+\tilde\nabla$.

Applying the index theorem for families to $\eth^{V}$, and making use of the Weitzenb\"ock formula \eqref{vertical laplacian}, we conclude that the kernel of $\eth^{V}$ forms a vector bundle of rank $|d^{r}\det(\bil_{ij})|=|\dis\Lambda_{d}|$. Therefore the $\eta$--form of the Levi-Civita superconnection is well-defined \cite{Berline:2004pb},
\begin{equation}
\hat\eta=\int_{0}^{\infty}\tr_{s}\left((\partial_{t}\mathbb{A}_{t})e^{-\mathbb{A}_{t}^{2}}\right)dt.
\end{equation}
In our situation, we actually have:
\begin{prop}\label{eta hat vanishes}
The $\eta$--form associated to the family $\eth^{V}$ vanishes identically.
\end{prop}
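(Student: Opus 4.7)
My plan is to exploit the orthogonal diagonalisation provided by the matrix $O$ in order to decompose the entire superconnection problem as a commuting product of $r$ two-dimensional Landau-type pieces, one for each eigenvalue $\nu_{k}$ of the Gram matrix. Concretely, I would set $\eth^{V}_{k}:=e_{2k-1}(X_{k})_{*}+e_{2k}(Y_{k})_{*}$, so that $\eth^{V}=\sum_{k}\eth^{V}_{k}$, and first verify, using $[X_{k*},Y_{l*}]=\nu_{k}\delta_{kl}Z_{*}$ together with the anticommutation of distinct Clifford pairs $e_{2k-1},e_{2k}$, that $\{\eth^{V}_{k},\eth^{V}_{l}\}=0$ for $k\neq l$; in particular $(\eth^{V})^{2}=\sum_{k}(\eth^{V}_{k})^{2}$. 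From \eqref{basederivative}, \eqref{Q decomposed} and the identity $e_{2k-1}e_{2k}=-i\omega_{k}$ I would then write $\nabla_{B}=2\pi i\beta+\sum_{k}\nabla_{B}^{(k)}$ with $\nabla_{B}^{(k)}:=-\pi id\nu_{k}y_{k}^{2}+\tfrac{i}{4}\omega_{k}$ (in the diagonal coordinates $y_{k}=\sum_{l}O_{kl}x_{l}$), so that both $y_{k}^{2}$ and $\omega_{k}$ act only on the $k$-th factor of the isomorphism $L^{2}(\RR^{r})\otimes\Sigma_{2r}\cong\bigotimes_{k}(L^{2}(\RR,y_{k})\otimes\Sigma_{2}^{(k)})$. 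Setting $\mathbb{A}_{t}^{(k)}:=t^{1/2}\eth^{V}_{k}+db\,\nabla_{B}^{(k)}$, a short super-anticommutator calculation (using $db\wedge db=0$) shows that the scalar $2\pi i\beta\,db$ drops out of $\mathbb{A}_{t}^{2}$ and that $\{\mathbb{A}_{t}^{(k)},\mathbb{A}_{t}^{(l)}\}=0$ for $k\neq l$, so that $\mathbb{A}_{t}^{2}=\sum_{k}(\mathbb{A}_{t}^{(k)})^{2}$ with pairwise commuting summands and $e^{-\mathbb{A}_{t}^{2}}=\bigotimes_{k}e^{-(\mathbb{A}_{t}^{(k)})^{2}}$.

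Granted this product structure, the supertrace of the $\eta$-form integrand factorises:
\begin{equation*}
\tr_{s}\!\bigl((\partial_{t}\mathbb{A}_{t})e^{-\mathbb{A}_{t}^{2}}\bigr)={\textstyle\sum_{k}}\,\tr_{s}^{(k)}\!\bigl((\partial_{t}\mathbb{A}_{t}^{(k)})e^{-(\mathbb{A}_{t}^{(k)})^{2}}\bigr)\prod_{l\neq k}\tr_{s}^{(l)}\!\bigl(e^{-(\mathbb{A}_{t}^{(l)})^{2}}\bigr),
\end{equation*}
where $\tr_{s}^{(k)}:=\tr(\omega_{k}\cdot)$ is the supertrace on the $k$-th slot. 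I would then observe that each spectator factor is a constant: after Duhamel and super-cyclicity its $db$-coefficient equals $-t^{1/2}\tr_{s}^{(l)}([\nabla_{B}^{(l)},\eth^{V}_{l}]e^{-t(\eth^{V}_{l})^{2}})$, a supertrace of an odd operator and so $0$, while its $0$-form part is the Witten index of the two-dimensional Dirac operator on the $l$-th slot, a direct sum-of-geometric-series computation giving $\sign(d)$. For the active $k$-th factor the $0$-form piece is likewise a supertrace of the odd operator $\eth^{V}_{k}e^{-t(\eth^{V}_{k})^{2}}$, hence vanishes, and the $db$-piece collapses after super-cyclicity via
\begin{equation*}
\tr_{s}^{(k)}\!\bigl(\eth^{V}_{k}[\nabla_{B}^{(k)},\eth^{V}_{k}]e^{-t(\eth^{V}_{k})^{2}}\bigr)=2\,\partial_{t}\,\tr_{s}^{(k)}\!\bigl(\nabla_{B}^{(k)}e^{-t(\eth^{V}_{k})^{2}}\bigr)
\end{equation*}
to a total $t$-derivative, so that after integration the $k$-th summand of $\hat\eta$ is proportional to $\bigl[F_{k}(t)\bigr]_{0}^{\infty}$ with $F_{k}(t):=\tr_{s}^{(k)}(\nabla_{B}^{(k)}e^{-t(\eth^{V}_{k})^{2}})$.

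The only step that resists pure formal manipulation, and thus the place where I expect the real work to lie, is the verification that $F_{k}(t)\equiv 0$ for all $t>0$. Working in the simultaneous eigenbasis $\phi_{n,s}=h_{n}(y_{k})\otimes e_{s}$ of $(\eth^{V}_{k})^{2}$ and $\omega_{k}$, and using the standard Hermite identity $\langle h_{n},y_{k}^{2}h_{n}\rangle=(2n+1)/(4\pi|d|\nu_{k})$ together with the slot version of \eqref{vertical laplacian}, which gives $(\eth^{V}_{k})^{2}\phi_{n,s}=4\pi|d|\nu_{k}\bigl(n+\tfrac{1-\sign(d)s}{2}\bigr)\phi_{n,s}$, one arrives at
\begin{equation*}
F_{k}(t)=\frac{i}{4}\sum_{n\geq 0,\,s=\pm 1}s\bigl(s-\sign(d)(2n+1)\bigr)\exp\!\bigl(-4\pi|d|\nu_{k}t(n+\tfrac{1-\sign(d)s}{2})\bigr).
\end{equation*}
Splitting this sum by chirality, the $s=+1$ series produces a term of shape $-2ne^{-xn}$ and the $s=-1$ series a term $+2(n+1)e^{-x(n+1)}$ (for $d>0$; the case $d<0$ is symmetric under $s\mapsto -s$), and the reindexing $n\mapsto n+1$ exhibits an exact telescoping cancellation giving $F_{k}(t)\equiv 0$. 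I anticipate, not coincidentally, that this cancellation is tied to the precise coefficient $\tfrac{1}{4}$ of the spin-connection correction in \eqref{basederivative}, and morally reflects the well-known absence of spectral asymmetry for the two-dimensional Dirac operator in a constant magnetic field. Once $F_{k}\equiv 0$ is established, every summand in $\hat\eta$ vanishes, so $\hat\eta\equiv 0$.
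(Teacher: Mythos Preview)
Your argument is correct, but it takes a genuinely different route from the paper's. The paper argues directly that the one-form part of the integrand vanishes pointwise in $t$: it observes that each eigenspace of $(\eth^{V})^{2}$ decomposes into $\eth^{V}$-invariant subspaces of dimension at most $2^{r}$ (spanned by the $\psi_{(\mathbf{n},\mathbf{s},\tilde\rho,\overline\beta)}$ with a fixed multiset of occupation numbers), and that by \eqref{Q decomposed} the \emph{projection} of $\nabla_{e_{0}}$ onto any such subspace is multiplication by a single scalar. Since $db$ anticommutes with $\eth^{V}$, the expression $\tr_{s}\bigl((\eth^{V}\tilde\nabla\eth^{V}+(\eth^{V})^{2}\tilde\nabla)e^{-t(\eth^{V})^{2}}\bigr)$ then collapses, block by block, to a trace involving the commutator of a scalar with $\eth^{V}$, hence zero. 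No explicit spectral computation or factorisation is invoked.

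Your approach instead pushes the diagonalisation by $O$ to its logical conclusion, splitting the entire Bismut superconnection into $r$ commuting two-dimensional Landau pieces and computing the single-slot function $F_{k}(t)$ explicitly on the Hermite basis. The telescoping you exhibit is exactly the mechanism that, in the paper's language, makes the projected $\nabla_{e_{0}}$ scalar: for $d>0$ the non-kernel $(\eth^{V}_{k})^{2}$-eigenspace with eigenvalue $4\pi|d|\nu_{k}n$ is spanned by $\phi_{n,+}$ and $\phi_{n-1,-}$, and your diagonal values $\tfrac{i}{4}(s-\sign(d)(2n+1))$ give $-\tfrac{i}{2}n$ on both, so the slot supertrace annihilates it. Your route is more explicit and makes the connection to the two-dimensional magnetic Dirac operator transparent; the paper's route is considerably shorter and avoids the bookkeeping of the tensor factorisation, at the cost of a rather terse appeal to the structure of the eigenspaces.
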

\begin{proof}
For dimensional reasons, it suffices to consider the one-form part, which is given by
$$\hat\eta=-\frac{1}{2}\int_{0}^{\infty}\tr_{s}\left(\left(\eth^{V}\tilde\nabla \eth^{V}+\eth^{V}\eth^{V}\tilde\nabla\right) e^{-t(\eth^{V})^{2}}\right)dt;$$
we claim that the integrand is already trivial. In order to see this, note that the space of eigenvectors of $(\eth^{V})^{2}$ for a given eigenvalue may be decomposed into $\eth^{V}$--invariant subspaces of dimension $\leq2^{r}$ (equality holding unless the number vector ${\bf n}$ contains a zero component); while $\nabla_{e_{0}}$ does not preserve these subspaces, by \eqref{Q decomposed} its action on all non-zero eigenvectors in this particular subspace {\em projects} to a multiplication action by the same number. Since $\tilde\nabla=({\textup d}b)\tilde\nabla_{e_{0}}$, and ${\textup d} b$ anticommutes with $\eth^{V}$, the claim follows (in fact, this explicit description shows that even more cancellation takes place).
\end{proof}

Now let $P^{0}$ be the projection onto the kernel of $\eth^{V}$; then we have an operator $\bar\eth^{B}=e_{0}\cdot P^{0}\nabla_{e_{0}}P^{0}$ which may be considered as a Dirac operator on the base circle twisted by the kernel of the vertical Dirac operator.
\begin{prop}\label{basic eta}
The reduced $\eta$--invariant of the twisted Dirac operator on the base circle is given by
\[\frac{\eta\left(\bar\eth^{B}\right) +\dim\ker\left(\bar\eth^{B}\right)}{2}\equiv \sign(d)^{r}\sum_{\rho\in\dis\Lambda_{d}}\left({\textstyle\frac{1}{2}}-\overline{\qu}_{d}(\rho)\right)\mod\ZZ.\]
\end{prop}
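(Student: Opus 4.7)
The plan is to make the kernel bundle and the operator $\bar\eth^B$ completely explicit, then reduce the reduced $\eta$-invariant to a Hurwitz-zeta sum. First, I would identify the kernel: by the Weitzenböck formula \eqref{vertical laplacian}, the spinor $\bar\psi_{({\bf n},{\bf s},\tilde\rho,\overline\beta)}$ lies in $\ker\eth^V$ precisely when ${\bf n}={\bf 0}$ and ${\bf s}=\sign(d)(1,\ldots,1)$, so an orthonormal basis of the kernel bundle is indexed by $\tilde\rho\in R_d$ and $\overline\beta\in\ZZ$.

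Next I would compute $P^0\nabla_{e_0}P^0$. On a kernel spinor the formula for $\nabla_{e_0}$ evaluates to multiplication by
\[
2\pi i\Bigl(\beta+\sign(d)\bigl(\tfrac{r}{8\pi}-\qu_{|d|}({\bf x})\bigr)\Bigr),
\]
using $\sum_k s_k=r\sign(d)$. The crucial observation is that the ground-state expectation of $\qu_{|d|}({\bf x})$ is exactly $r/(8\pi)$: expanding via \eqref{Q decomposed}, only the term $\mathcal{A}_k\mathcal{A}_k^\dagger$ survives projection onto $h_{\bf 0}$, contributing $|d|\cdot\tfrac{1}{2}\sum_k\tfrac{4\pi|d|\nu_k}{(4\pi|d|)^2\nu_k}=r/(8\pi)$. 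Thus the harmonic-potential term cancels the spin correction, leaving $P^0\nabla_{e_0}P^0=2\pi i\,(\overline\beta+\qu_d(\tilde\rho))\cdot P^0$. The Clifford element $e_0$ acts on a chirality-${\bf s}$ basis vector as $-i\prod_k s_k=-i\sign(d)^r$, so the eigenvalues of $\bar\eth^B$ are
\[
\sigma_{\tilde\rho,\overline\beta}=2\pi\sign(d)^r\bigl(\overline\beta+\qu_d(\tilde\rho)\bigr).
\]

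Finally, I would sum the resulting series. For each $\tilde\rho$ choose the representative with $\alpha:=\qu_d(\tilde\rho)\in[0,1)$. When $\alpha\in(0,1)$, the contribution to $\eta$ from this sector equals $\sign(d)^r[\zeta(s,\alpha)-\zeta(s,1-\alpha)]|_{s=0}=\sign(d)^r(1-2\alpha)$ by the standard identity $\zeta(0,a)=\tfrac12-a$; when $\alpha=0$ the symmetric regularization yields $0$, while $\dim\ker(\bar\eth^B)$ picks up $1$. Assembling,
\[
\eta(\bar\eth^B)+\dim\ker(\bar\eth^B)=2\sign(d)^r\!\!\sum_{\rho\in\dis\Lambda_d}\!\!\bigl(\tfrac12-\overline{\qu}_d(\rho)\bigr)+\bigl(1-\sign(d)^r\bigr)\cdot\#\{\rho:\overline{\qu}_d(\rho)=0\},
\]
and the correction term is an integer, which is absorbed by the $\mod\ZZ$ congruence.

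I expect the main obstacle to lie entirely in step two: verifying that the a priori unrelated numerical constants coming from the spin-connection correction $\tfrac14\sum_k s_k$ and from the zero-point expectation of $\qu_{|d|}$ really do cancel with the correct signs and factors of $|d|$ and $\nu_k$. Once that cancellation is in place, the remaining analysis reduces to the classical Hurwitz-zeta evaluation and some bookkeeping over representatives of $\dis\Lambda_d$.
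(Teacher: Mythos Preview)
Your proposal is correct and follows essentially the same route as the paper's proof, but with considerably more detail supplied; in particular, the paper simply asserts that $P^{0}\nabla_{e_{0}}$ acts on the kernel by $2\pi i(\overline\beta+\qu_{d}(\tilde\rho))$ and passes directly to the Hurwitz-zeta evaluation, whereas you explicitly verify the cancellation of the zero-point expectation of $\qu_{|d|}({\bf x})$ against the spin-connection term and work out the action of $e_{0}$ on the fixed-chirality eigenvector.
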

\begin{proof}
On the basis for the kernel of $\eth^{V}$, $P^{0}\nabla_{e_{0}}$ simply acts as multiplication by $2\pi i (\overline{\beta}+\qu_{d}(\tilde\rho))$, hence the spectrum is given by
\[\mathrm{spec}(\bar\eth^{B})=\left\{2\pi \sign(d)^{r}(k+\qu_{d}(\tilde\rho))\colon k\in\ZZ, \tilde\rho\in R_{d}\right\}.\]
Using  the Hurwitz zeta function (defined by $\zeta(s,x)=\sum_{n\geq0}(n+x)^{-s}$ for $x>0$ and $\Re(s)>1$), which satisfies $\zeta(0,x)=\frac{1}{2}-x$, the claim follows.
\end{proof}

Now we can put everything together: For $\epsilon>0$, let $\langle\cdot,\cdot\rangle_{\epsilon}$ be the metric obtained by rescaling the metric on the base by a factor $\epsilon^{-1}$, i.e.\ the one for which $\{\epsilon^{1/2}\overline{B},\overline{X}_{1},\overline{Y}_{1},\dots,\overline{X}_{r},\overline{Y}_{r}\}$ becomes an  orthonormal frame, and let $\eth_{\epsilon}=\epsilon^{1/2}e_{0}\cdot\nabla_{e_{0}}+\eth^{V}$ be the corresponding Dirac operator (still twisted by the $d^{th}$ power of $\lambda$). The reduced $\eta$--invariant of $\eth_{\epsilon}$ admits a limit  (as $\epsilon\rightarrow0$); moreover, since the kernel of $\eth^{V}$ forms a vecor bundle, we may apply \cite[Theorem 0.1']{Dai:1991os}: For the one-dimensional base manifold $\ZZ\backslash\RR$ Dai's formula can be stated as:
\begin{equation*}\label{Dai's formula}
\lim_{\epsilon\rightarrow0}\frac{\eta(\eth_{\epsilon})+\dim\ker(\eth_{\epsilon})}{2}\equiv\frac{\eta\left(\bar\eth^{B}\right) +\dim\ker\left(\bar\eth^{B}\right)}{2}+\frac{1}{2\pi i}\int_{\ZZ\backslash\RR}\hat\eta\mod\ZZ.
\end{equation*}
Thus, Theorem 1 follows from the previous propositions.


\section{Stable homotopy content}\label{stable stems}

\subsection{The $f$--invariant of $[\Gamma\backslash G]$}
By the Atiyah--Patodi--Singer index theorem, the variation of reduced $\eta$--invariants is local; in particular, we can consider the family of Levi-Civita connections $\nabla^{TM,LC,\epsilon}$ associated to the metrics $\langle\cdot,\cdot\rangle_{\epsilon}$ for $\epsilon>0$; moreover, on $M$ we have the tautological flat connection $\nabla^{TM,0}$ annihilating the global frame, hence
\begin{equation*}
\begin{split}
&\quad\frac{\eta(\eth_{\epsilon})+\dim\ker(\eth_{\epsilon})}{2}-\int_{M}cs(\nabla^{TM,LC,\epsilon},\nabla^{TM,0})\\
&\equiv\frac{\eta(\eth)+\dim\ker(\eth)}{2}-\int_{M}cs(\nabla^{TM,LC},\nabla^{TM,0})\mod\ZZ,
\end{split}
\end{equation*}
where $cs$ is shorthand for the Chern--Simons form transgressing the index densities  expressed in terms of the connections on $TM$ (keeping $\nabla^{\lambda}$ fixed). (Here, we use the same sign convention as in \cite{Bodecker:2014vn}.) 

By the results of Bismut and Cheeger, the family of connections $\nabla^{TM,LC,\epsilon}$  approaches a well-defined connection in the limit $\epsilon\rightarrow0$. In our situation, we actually have:
\begin{prop}\label{Chern--Simons vanishes}
$\lim_{\epsilon\rightarrow0}cs(\nabla^{TM,LC,\epsilon},\nabla^{TM,0})=0$.
\end{prop}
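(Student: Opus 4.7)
My plan is to identify a frame in which the adiabatic scaling becomes transparent. Work in the $\langle\cdot,\cdot\rangle_{\epsilon}$--orthonormal frame $\mathcal{F}_{\epsilon}=\{e_{0}^{\epsilon}=\epsilon^{1/2}\overline{B},\,\overline{X}_{1},\overline{Y}_{1},\dots,\overline{X}_{r},\overline{Y}_{r}\}$. Because the center $Z$ was projected out in passing from $G$ to $\overline{G}$, the only non-vanishing bracket among the parallelism fields on $M$ is $[\overline{B},\overline{X}_{i}]=\pm\overline{Y}_{i}$, which rescales to $[e_{0}^{\epsilon},\overline{X}_{i}]=\pm\epsilon^{1/2}\overline{Y}_{i}$ while all other brackets vanish. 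By Koszul's formula for a left-invariant metric on a Lie group, $\nabla^{TM,LC,\epsilon}_{V}W=\tfrac12[V,W]+U(V,W)$, where the symmetric tensor $U$ is determined by $2\langle U(V,W),X\rangle_{\epsilon}=\langle[X,V],W\rangle_{\epsilon}+\langle V,[X,W]\rangle_{\epsilon}$; since every bracket on the right is either zero or of order $\epsilon^{1/2}$, every entry of the connection 1-form matrix $\omega^{LC,\epsilon}$ of $\nabla^{TM,LC,\epsilon}$ in the frame $\mathcal{F}_{\epsilon}$ is $O(\epsilon^{1/2})$.

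The tautological flat connection $\nabla^{TM,0}$ has, by construction, trivial 1-form in the parallelism frame. The transition from this frame to $\mathcal{F}_{\epsilon}$ is implemented by the \emph{constant} diagonal matrix $P=\diag(\epsilon^{1/2},1,\dots,1)$, so $dP=0$ and the transformed 1-form $P^{-1}\cdot 0\cdot P+P^{-1}dP$ still vanishes. Consequently, in the frame $\mathcal{F}_{\epsilon}$ the difference tensor $S^{\epsilon}=\omega^{LC,\epsilon}$ is $O(\epsilon^{1/2})$, and along the affine path $\nabla_{t}=\nabla^{TM,0}+tS^{\epsilon}$ the curvature $R^{\epsilon}_{t}=t\,dS^{\epsilon}+t^{2}S^{\epsilon}\wedge S^{\epsilon}$ is $O(\epsilon^{1/2})$ as well.

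Because $\nabla^{\lambda}$ is held fixed, only the $TM$--factor transgresses, so $cs(\nabla^{TM,LC,\epsilon},\nabla^{TM,0})=cs(\hat A)(\nabla^{TM,LC,\epsilon},\nabla^{TM,0})\wedge\mathrm{ch}(\lambda^{d})$, with the second factor a bounded $\epsilon$--independent form. By the standard transgression formula, the contribution of the degree--$n$ homogeneous piece $\hat A_{n}$ to $cs(\hat A)$ is a $t$--integral of a polynomial with exactly one factor of $S^{\epsilon}$ and $n-1$ factors of $R^{\epsilon}_{t}$. Since $\hat A(R)=1+O(R^{2})$ contains no linear term, only $n\geq 2$ contribute, and each such summand is of order $O(\epsilon^{n/2})$ when expressed in the $\mathcal{F}_{\epsilon}$ coframe. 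Passing back to the parallelism coframe inserts at most one factor of $\epsilon^{-1/2}$ per wedge (since $(e_{0}^{\epsilon})^{*}=\epsilon^{-1/2}\overline B^{*}$ can occur at most once by alternation), yielding coefficients of order $O(\epsilon^{(n-1)/2})\to 0$ uniformly on $M$ as $\epsilon\to 0$, which establishes the claim.

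The main subtlety is that the vanishing is not visible in the parallelism frame itself, where a direct Koszul computation gives, for instance, the $\epsilon$--independent identity $\langle\nabla^{LC,\epsilon}_{\overline X_{i}}\overline B,\overline Y_{j}\rangle_{\epsilon}=\pm\tfrac12\delta_{ij}$. The key device is to pass to $\mathcal{F}_{\epsilon}$: since the change of basis to this frame is constant, it preserves $\nabla^{TM,0}$ as a trivially flat connection (not merely flat up to a non-zero 1-form), which is exactly what allows the uniform $O(\epsilon^{1/2})$ estimate on $S^{\epsilon}=\omega^{LC,\epsilon}$ to kick in.
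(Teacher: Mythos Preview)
Your argument is correct, and it takes a genuinely different route from the paper's. The paper first invokes the Bismut--Cheeger result that $\nabla^{TM,LC,\epsilon}$ converges to a well-defined limiting connection $\nabla^{TM,LC,0}$, then uses continuity of the Chern--Simons form to reduce to computing $cs(\nabla^{TM,LC,0},\nabla^{TM,0})$; finally it observes that the connection matrix $\theta$ of the limiting connection (in the parallelism frame) is nilpotent enough that every matrix of the form $\theta^{2l+1}(d\theta)^{l'}$ with $l+l'>0$ is strictly triangular, hence traceless, forcing the Chern--Simons form to vanish identically. Your approach instead keeps $\epsilon>0$ throughout and obtains a quantitative bound: passing to the $\langle\cdot,\cdot\rangle_{\epsilon}$--orthonormal frame makes all structure constants $O(\epsilon^{1/2})$, whence $S^{\epsilon}$ and $R^{\epsilon}_{t}$ are $O(\epsilon^{1/2})$ (in fact $R^{\epsilon}_{t}=O(\epsilon)$, though you don't need this), and the absence of a linear term in $\hat A$ together with the single-slot alternation bound on $(e_{0}^{\epsilon})^{*}$ gives $cs=O(\epsilon^{1/2})$ in the fixed parallelism coframe. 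The trade-off: the paper's algebraic argument exhibits the exact vanishing at the limit and explains it structurally (nilpotency of the connection matrix mirrors the nilpotency of $\overline{\mathfrak g}$), while your scaling argument is more self-contained---it does not need to identify the limiting connection or cite its existence---and yields an explicit rate of convergence.
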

\begin{proof}
Since the Chern--Simons form depends continuously on the connection, we may directly compute it for the limiting connection $\nabla^{TM,LC,0}$: Consider the one-forms $\theta^{i}_{j}=\langle\nabla^{TM,LC,0}e_{j},e_{i}\rangle$ (which are globally defined, since $\langle\nabla^{TM,0}e_{j},e_{i}\rangle=0$), combined into a matrix $\theta$. By direct inspection, the matrices  $\theta^{2l+1}({\textup d}\theta)^{l'}$ (where wedge products of forms are omitted) become strictly triangular, hence traceless, provided that $l+l'>0$. Since the Chern--Simons form is built from such traces, the claim follows. 
\end{proof}

Thus, the $f$--invariant is given by the following arithmetic formula:
\begin{lem}\label{lemma}
At a given level $N>1$, the $f$--invariant is given by
\begin{multline*}
f[\Gamma\backslash G]\equiv-\sum_{n\geq1}\left(\sum_{d|n}\left(\zeta_{N}^{-n/d}+(-1)^{r}\zeta_{N}^{n/d}\right)\sum_{\rho\in\dis\Lambda_{d}}{\overline\qu}_{d}(\rho)\right)q^{n}\\
\mod\underline{\underline{D}}_{r+2}^{\Gamma_{1}(N)}+\ZZ^{\Gamma_{1}(N)}[\![q]\!].
\end{multline*}
\end{lem}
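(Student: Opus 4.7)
The plan is to feed Theorem 1 and Proposition \ref{Chern--Simons vanishes} into the general formula from \cite{Bodecker:2014vn} that expresses the $f$--invariant of a parallelized principal circle bundle in terms of the reduced $\eta$--invariants of the base Dirac operator twisted by the powers $\lambda^{d}$, corrected by Chern--Simons transgressions.

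First I would recall that, at level $N$, the arithmetic expression of \cite{Bodecker:2014vn} writes $f[\Gamma\backslash G]$ as a $q$--series whose coefficient of $q^{n}$ is a sum over the non-zero integer divisors $d$ of $n$ of the quantity
\[
\frac{\eta(\eth\otimes\lambda^{d})+\dim\ker(\eth\otimes\lambda^{d})}{2}-\int_{M}cs(\nabla^{TM,LC},\nabla^{TM,0}),
\]
weighted by a root of unity of the form $\zeta_{N}^{n/d}$ (with the sign convention of that paper). Because this combination depends on the Riemannian metric only through terms lying in $\ZZ$, I may replace the Levi--Civita connection by its adiabatic family $\nabla^{TM,LC,\epsilon}$ and let $\epsilon\to 0$ without leaving the residue class modulo $\ZZ$. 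In the limit, Proposition \ref{Chern--Simons vanishes} kills the Chern--Simons piece, while Theorem 1 evaluates the remaining $\eta$--term explicitly as
\[
(\sign d)^{r}\sum_{\rho\in\dis\Lambda_{d}}\Bigl(\tfrac{1}{2}-\overline{\qu}_{d}(\rho)\Bigr)\pmod{\ZZ}
\]
for every non-zero $d\in\ZZ$.

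Plugging this into the $q$--series splits the coefficient of $q^{n}$ into two pieces. The constant contribution $(\sign d)^{r}|\dis\Lambda_{d}|/2$ is rational and, summed against the roots of unity $\zeta_{N}^{n/d}$, produces a $q$--series in $\ZZ^{\Gamma_{1}(N)}[\![q]\!]$ (using that $|\dis\Lambda_{d}|=|d|^{r}|\dis\Lambda|$ and pairing $d$ with $-d$); this is absorbed into the stated quotient. The $-\overline{\qu}_{d}(\rho)$ contribution remains. Reparametrising the $d<0$ summands via $d\mapsto -d$, and using $\dis\Lambda_{-d}=\dis\Lambda_{d}$ together with $\overline{\qu}_{-d}=-\overline{\qu}_{d}$, consolidates the sum over non-zero divisors into a sum over positive divisors in which the two sign factors $(\sign d)^{r}$ from $d>0$ and $d<0$ collapse into the bracket $\zeta_{N}^{-n/d}+(-1)^{r}\zeta_{N}^{n/d}$ advertised in the lemma, leaving the overall minus sign.

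The main obstacle is bookkeeping rather than conceptual: aligning the sign and orientation conventions so that the roots of unity appear with the exponents $\pm n/d$ as stated, and verifying that the half-integer contributions genuinely assemble into an element of $\ZZ^{\Gamma_{1}(N)}[\![q]\!]$ after the pairing of positive and negative divisors; beyond that, the lemma is essentially a direct substitution of the adiabatic formula into the general recipe of \cite{Bodecker:2014vn}.
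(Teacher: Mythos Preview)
Your overall strategy---feeding Theorem~1 and Proposition~\ref{Chern--Simons vanishes} into the main formula of \cite{Bodecker:2014vn} and then reorganizing the sum over non-zero divisors into a sum over positive divisors---is exactly the paper's approach, and your handling of the $-\overline{\qu}_{d}(\rho)$ piece via $\overline{\qu}_{-d}\equiv-\overline{\qu}_{d}$ is fine.

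The gap is in your treatment of the constant piece $(\sign d)^{r}|\dis\Lambda_{d}|/2$. You assert that, after pairing $d$ with $-d$, this contribution lands in $\ZZ^{\Gamma_{1}(N)}[\![q]\!]$; but the pairing only produces
\[
\frac{d^{r}|\dis\Lambda|}{2}\bigl(\zeta_{N}^{n/d}+(-1)^{r}\zeta_{N}^{-n/d}\bigr),
\]
and for odd level $N$ and odd $|\dis\Lambda|$ this half-integer multiple of an algebraic integer is genuinely not in $\ZZ^{\Gamma_{1}(N)}$ (take $r=2$, $N=3$, $|\dis\Lambda|=3$, $d=n=1$: one gets $-3/2\notin\ZZ[\zeta_{3},1/3]$). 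So this step fails as written, and you yourself flag it as unverified in your final paragraph.

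The paper disposes of this piece differently: it first replaces $|\dis\Lambda_{d}|/2$ by $d^{r+1}|\dis\Lambda|/2$, which is allowed since their difference is an integer, and then observes that the resulting $q$--series is $|\dis\Lambda|/2$ times the higher--coefficient part of the Eisenstein series $\widehat{G}_{r+2}^{(N)}$, a genuine modular form of weight $r+2$ for $\Gamma_{1}(N)$. Thus the half-integer contribution is absorbed into $\underline{\underline{D}}_{r+2}^{\Gamma_{1}(N)}$, not into $\ZZ^{\Gamma_{1}(N)}[\![q]\!]$. Making this one substitution fixes your argument.
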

\begin{proof}
By Proposition \ref{Chern--Simons vanishes}, we may simply plug the expression obtained from Theorem 1 (which clearly lies in $\QQ/\ZZ$) into the main theorem of \cite{Bodecker:2014vn}; since 
$$\sign(d)^{r}\sum_{\rho\in\dis\Lambda_{d}}\left({\textstyle\frac{1}{2}}-\overline{\qu}_{d}(\rho)\right)\equiv \sign(d)^{r+1}\sum_{\rho\in\dis\Lambda_{d}}\left({\textstyle\frac{1}{2}}-\overline{\qu}_{|d|}(\rho)\right)\mod\ZZ,$$
and $|\dis{\Lambda_{d}}|/2\equiv d^{r+1}|\dis\Lambda|/2$, the result follows from the fact that
$$\widehat{G}_{k}^{(N)}(\tau)=c_{k}-\sum_{n\geq1}\left(\sum_{d|n}\left(\zeta_{N}^{-n/d}+(-1)^{k}\zeta_{N}^{n/d}\right)d^{k-1}\right)q^{n}$$
is a modular form of weight $k$ for the congruence subgroup $\Gamma_{1}(N)$.
\end{proof}

\subsection{Proof of the Corollaries}

Having established Lemma \ref{lemma}, we now construct an explicit rational polynomial lifting  the map
\[d\mapsto\sum_{\rho\in\dis\Lambda_{d}}\overline{\qu}_{d}(\rho).\] The existence of the Smith normal form for the bilinear form, i.e.\ the existence of invertible integral matrices $S,T$ such that $S(\bil_{ij})T=\diag(d_{1},\dots,d_{r})$ where $d_{1}|d_{2}|\dots|d_{r}$ forms a chain of divisors, implies that, for fixed $d$,  there is an integral basis $\{u_{1},\dots,u_{r}\}$ for $(\Lambda_{d})^{\vee}$ such that $\{dd_{1}u_{1},\dots,dd_{r}u_{r}\}$ is a basis for $\Lambda$. Hence, for $r=2$, we may compute
\begin{equation*}
\begin{split}
\sum_{\rho\in\dis\Lambda_{d}}\overline{\qu}_{d}(\rho)&\equiv\sum_{k_{1}=1}^{dd_{1}}\sum_{k_{2}=1}^{dd_{2}}\qu_{d}(k_{1}u_{1}+k_{2}u_{2})\\
&=\sum_{k_{1}=1}^{dd_{1}}\sum_{k_{2}=1}^{dd_{2}}\left(\qu_{d}(k_{1}u_{1})+\qu_{d}(k_{2}u_{2})+\bil_{d}(k_{1}u_{1},k_{2}u_{2})\right)\\
&= d^{2}d_{2}\qu(u_{1})\frac{dd_{1}(dd_{1}+1)(2dd_{1}+1)}{6}\\&\quad+d^{2}d_{1}\qu(u_{2})\frac{dd_{2}(dd_{2}+1)(2dd_{2}+1)}{6}\\&\quad+d\frac{dd_{1}(dd_{1}+1)}{2}\frac{dd_{2}(dd_{2}+1)}{2}\bil(u_{1},u_{2})\\
&=\frac{d^{3}}{3}d_{1}d_{2}\qu(dd_{1}u_{1})+\frac{d^{2}}{2}d_{2}\qu(dd_{1}u_{1})+\frac{d}{6}d_{1}d_{2}\qu(du_{1})\\
&\quad+\frac{d^{3}}{3}d_{1}d_{2}\qu(dd_{2}u_{2})+\frac{d^{2}}{2}d_{1}\qu(dd_{2}u_{2})+\frac{d}{6}d_{1}d_{2}\qu(du_{2})\\
&\quad+\frac{d^{3}d_{1}d_{2}+d^{2}(d_{1}+d_{2})+d}{4}\bil(dd_{1}u_{1},dd_{2}u_{2}).
\end{split}
\end{equation*}
Furthermore, we have
\[\bil(dd_{1}u_{1},dd_{2}u_{2})\equiv|\dis\Lambda|=|d_{1}d_{2}|\equiv d_{1}\mod 2,\]
since $d_{1}$ is the greatest common divisor of the even matrix $(\bil_{ij})$. In particular, it follows that $d_{1}d_{2}\qu(du_{1})$ and $d_{1}d_{2}\qu(du_{2})$ are integers, hence
\[\sum_{\rho\in\dis\Lambda_{d}}\overline{\qu}_{d}(\rho)\equiv\alpha'd^{3}+\frac{|\dis\Lambda|}{4}d\mod\ZZ\]
for some $\alpha'\in\QQ$ satisfying $12\alpha'\in\ZZ$.  But
\begin{equation}\label{top form}
\sum_{n\geq1}\left(\sum_{d|n}\left(\zeta_{N}^{-n/d}+(-1)^{r}\zeta_{N}^{n/d}\right)d^{r+1}\right)q^{n}\equiv0\mod\underline{\underline{D}}_{r+2}^{\Gamma_{1}(N)},
\end{equation}
therefore we conclude
$$f[\Gamma\backslash G]\equiv|\dis\Lambda|\left(\frac{1}{4}\sum_{n\geq1}\left(\sum_{d|n}\left(\zeta_{N}^{-n/d}+\zeta_{N}^{n/d}\right)d\right)q^{n}\right)\mod\underline{\underline{D}}_{4}^{\Gamma_{1}(N)},$$
where the term in parenthesis indeed represents the $f$--invariant of the non-trivial element $\nu^{2}\in\pi^{S}_{6}S^{0}\cong\ZZ/2$ (see e.g.\ \cite{Bodecker:2008pi}), proving Corollary \ref{beta_{2/2}}.

Corollary \ref{vanish} follows similarly: For arbitrary $r\geq1$, we can use the aforementioned (choice of) basis $\{u_{1},\dots,u_{r}\}$ to produce a description of the form
\[\sum_{\rho\in\dis\Lambda_{d}}\overline{\qu}_{d}(\rho)\equiv\alpha d^{r+1}+\beta d^{r}+ \gamma d^{r-1}\mod\ZZ,\]
where $2\beta\in\ZZ$ and $12\gamma\in\ZZ$ by elementary number theory; in particular, this implies 
$\alpha d^{r+1}+\beta d^{r}+ \gamma d^{r-1}\equiv(\alpha+\beta+\gamma)d^{r+1}\mod\ZZ$ for $r\geq3$, hence in this situation,  the $f$--invariant is clearly trivial.

\bibliography{refbib_edited}
\end{document}